\documentclass[11pt]{amsart}
\usepackage{amssymb}
\usepackage{amsfonts}
\usepackage{amscd}
\usepackage{amsmath}
\usepackage{mathrsfs}
\usepackage{curves}
\usepackage{epsfig}
\usepackage[pass]{geometry}
\usepackage{graphicx}
\usepackage{verbatim}
\usepackage{latexsym}
\usepackage{eucal}

\setlength{\oddsidemargin}{.05in}
\setlength{\evensidemargin}{.05in}
\setlength{\textwidth}{6.45in}
\setlength{\topmargin}{.25in}
\setlength{\textheight}{608pt}
 
\numberwithin{equation}{section}

\newtheorem{theorem}{Theorem}[section]
\newtheorem{lemma}[theorem]{Lemma}
\newtheorem{prop}[theorem]{Proposition}

\newtheorem{definition}[theorem]{Definition}

\def \mca {{\mathscr A}}

\def \mcd {{\mathscr D}}
\def \mce {{\mathscr E}}
\def \mcf {{\mathscr F}}
\def \mcg {{\mathscr G}}
\def \mch {{\mathscr H}}
\def \mci {{\mathscr I}}

\def \mco {{\mathscr O}}
\def \mcp {{\mathscr P}}

\def \mcs {{\mathscr S}}
\def \mct {{\mathscr T}}

\def \mcz {{\mathscr Z}}
\def \mcy {{\mathscr Y}}

\def \mbr {{\mathbb R}}

\def \id {\operatorname{Id}}

\def \tr {\operatorname{Tr}}

\def \diag{\textrm{Diag}}

\def \beqq {\begin{equation}}
\def \eeqq {\end{equation}}

\def \bpf {\begin{proof}}
\def \epf {\end{proof}}
\def \beq {\begin{equation*}}
\def \eeq {\end{equation*}}


\def \eps {\epsilon}   
\def \la {\lambda}   
\def \La {\Lambda}    
   
\def \lap {\Delta}
\def \p {\partial}
\def \ha {\frac{1}{2}}

\def \tilde {\widetilde}

\def \uone {u^{(1)}}
\def \utwo {u^{(2)}}
\def \fone  {f^{(1)}}
\def \ftwo  {f^{(2)}}

\begin{document}
 \title{Nonlinear interaction of waves in elastodynamics and an inverse problem}
\author{Maarten de Hoop}
\address{Maarten de Hoop
\newline
\indent Computational and Applied Mathematics and Earth Science, Rice University}
\email{mdehoop@rice.edu}

\author{Gunther Uhlmann} 
\address{Gunther Uhlmann
\newline
\indent Department of Mathematics, University of Washington,  
\newline
\indent and Institute for Advanced Study, the Hong Kong University of Science and Technology}
\email{gunther@math.washington.edu}

\author{Yiran Wang}
\address{Yiran Wang
\newline
\indent Department of Mathematics, University of Washington 
}
\email{wangy257@math.washington.edu}
 \begin{abstract}
We consider nonlinear elastic wave equations generalizing Gol'dberg's five constants model. We analyze the nonlinear interaction of two distorted plane waves and characterize the possible nonlinear responses. Using the boundary measurements of the nonlinear responses, we solve the inverse problem of determining elastic parameters from the displacement-to-traction map.   
\end{abstract}
\maketitle

\section{Introduction}

\subsection{The nonlinearity in elastodynamics}
We introduce the nonlinear elastic system to be studied in this work. Our model is a generalization of the five constant model widely used in the literature since the work of Gol'dberg \cite{Gol}. We shall follow the presentation in Landau-Lifschitz \cite{LL}.  The materials are classical, however we would like to review its derivation to show the sources and significance of the nonlinearity in elastodynamics. 

Consider an elastic body occupying an open bounded region $\Omega \subset \mbr^3$ with smooth connected boundary $\p \Omega$. The closure is denoted by $\overline\Omega$. We denote points in $\mbr^3$ by $x = (x_1, x_2, x_3)$. When the body is deformed, the distances between points are changed. Suppose that point $x\in \Omega$ is displaced to $x' = (x_1', x_2', x_3') \in \mbr^3$ and the displacement vector is $u = x' - x$. The length element $dl = (dx_1 + dx_2 + dx_3)^\ha$ is changed to $dl'= (dx'_1 + dx'_2 + dx'_3)^\ha$ and 
\beq
(dl')^2 = dl^2 + 2 e_{ik} dx_i dx_k,
\eeq
where $e_{ik}$ is the {\em strain tensor} defined by
\beqq\label{eqstrain}
e_{mn} =  \ha (\frac{\p u_m}{\p x_n} + \frac{\p u_n}{\p x_m} + \frac{\p u_k}{\p x_m}\frac{\p u_k}{\p x_n}).
\eeqq
Hereafter, the Einstein summation convention is used. The strain tensor describes the changes in an element of length when the body is under deformation. For small deformations, one   ignores the quadratic terms and take 
\beq
\tilde e_{mn} = \ha (\frac{\p u_m}{\p x_n} + \frac{\p u_n}{\p x_m})
\eeq
as an approximation of $e_{mn}. $ This is the strain tensor used in linearized elasticity. 

We only consider the thermostatic state of the body so that the free energy  $\mce$ of the body is a scalar function of the strain tensor only, namely $\mce = \mce(e_{ik})$. For an isotropic elastic medium, we can express $\mce$ in terms of the invariants $\tr(e), \tr(e^2), \tr(e^3)$ etc. For small deformation, one  expand $\mce$ up to quadratic terms in $\nabla u$ to get
\beq
\begin{gathered}
\mce
= \mce_0 + \ha \la(x) (\tr \tilde e)^2 + \mu(x) \tr(\tilde e^2) 
= \mce_0 + \ha \la(x)  (\tilde e_{ii})^2 + \mu(x) \tilde e_{ik}^2,
\end{gathered}
\eeq
where $\mce_0$ is a constant and $\la, \mu$ are called Lam\'e coefficients. Note that the $\tilde e_{ik}$ above are indeed $e_{ik}$ as the higher order terms are ignored. The {\em stress tensor} is given by
\beqq\label{eqlinstress}
\tilde S_{mn} = \dfrac{\p \mce}{\p \tilde e_{mn}} = \la(x) \tilde e_{ii} \delta_{mn} + 2\mu(x) \tilde e_{mn}.
\eeqq
To show the dependence of $\tilde S$ on $x\in \mbr^3$ and $u$, we also use the notation $\tilde S(x, u)$. The stress tensor is related to the internal force $T$ of the body under deformation via  $T = \nabla\cdot \tilde S.$ 
Now using Newton's second law, we obtain the differential equation describing the deformation of the body
\beqq\label{eqlin0}
\rho \frac{\p^2 u}{\p t^2} = \nabla\cdot \tilde S(x, u) + F,
\eeqq
where $F = (F_1, F_2, F_3)\in \mbr^3$ is an (external) force on the body (e.g.\ the gravity) and $\rho$ is the density of the elastic medium.  Actually, we just derived the linearized elastic wave equation.

Now we take into account the nonlinear effects. We expand the energy density $\mce$ to cubic terms
\beq
\begin{gathered}
\mce = \mce_0 + \ha \la(x) (\tr e)^2 + \mu(x) \tr (e^2) + \frac 13 A(x)\tr(e^3) + B(x)\tr(e^2)\tr(e) + \frac 13 C(x) (\tr e)^3\\
= \mce_0 + \ha \la(x) (e_{ii})^2 + \mu(x) e_{ik}^2 + \frac 13 A(x) e_{ik}e_{il}e_{kl} + B(x) e_{ik}^2 e_{ll} + \frac 13 C(x) (e_{ll})^3,
\end{gathered}
\eeq
 see Landau-Lifschitz \cite[Section 26]{LL}. In the reference, $\la, \mu, A, B, C$ are all constants so the model is called the five constant model. Other equivalent forms in the literature and their relations can be found in Norris \cite{No}. Here, we consider a more general model in which all the parameters are smooth functions on $\overline\Omega$. In the expression of $\mce$, we should use the strain tensor in \eqref{eqstrain} and keep the nonlinear terms. We consider the tensor defined as
\beqq\label{eqsig}
\begin{gathered}
S_{mn} = \dfrac{\p \mce}{\p  (\p u_m/ \p x_n)} =  \la(x)  e_{jj}  (\delta_{mn} + \frac{\p u_m}{\p x_n}) + 2\mu(x) (e_{nm} + e_{nj} \frac{\p u_m}{\p x_j}) \\
+ A(x)e_{mj}e_{nj} + B(x)(2e_{jj}e_{mn} + e_{ij}e_{ij}\delta_{mn}) + C(x)e_{ii}e_{jj} \delta_{mn}, \ \ m, n = 1, 2, 3. 
\end{gathered}
\eeqq
This tensor is no longer the stress tensor and it is not symmetric. However, the quantity $\nabla\cdot S$ still gives the internal force, hence we again get the dynamical equation of the same form   
 \beqq\label{eqnon0}
 \rho \frac{\p^2 u}{\p t^2}  =  \nabla\cdot  S(x, u) +  F.
\eeqq
This is the nonlinear elastic equation we study in this work. We point out that the nonlinearity of the system comes from two  sources: the higher order expansion of the free energy $\mce$ and the nonlinear term in the strain tensor.

\subsection{The interaction of two waves}
We consider the initial boundary value problem for \eqref{eqnon0}:
\beqq\label{eqnon}
\begin{gathered}
 \frac{\p^2 u(t, x)}{\p t^2} -  \nabla\cdot  S(x, u(t, x)) = 0, \ \ (t, x)\in \mbr\times \Omega,\\
 u(t, x) = f(t, x), \ \  (t, x)\in \mbr_+\times \p \Omega,\\
 u(t, x)  =  0, \ \ (t, x)\in \mbr_-\times \Omega,
\end{gathered}
\eeqq
where $S(x, u(t, x))$ is given by \eqref{eqsig}. Throughout this work, we assume that $\la, \mu, A, B, C$ are smooth functions on $\mbr\times \overline\Omega$. Here, for simplicity, we took $\rho = 1$. We know (see e.g. \cite{SD}) that upon changing variables and introducing lower order terms, the system \eqref{eqnon0} can always be reduced to $\rho=1$. Also, we took $F = 0$ in \eqref{eqnon0}. It is easy to see that $u = 0$ is a trivial solution to the problem if $f = 0$. Later, we also use $Z = \mbr\times \overline\Omega$ and $Y = \mbr \times \p\Omega$. 

The equation in \eqref{eqnon} is a second order quasilinear system. In general, the solution may develop shocks and we do not expect long time existence result. We establish the well-posedness for small boundary data in Section \ref{sec-well}. The novelty of this work is that we analyze the nonlinear interactions of two (distorted) plane waves and show that certain nonlinear responses are generated and they carry the information of the nonlinear parameters. More precisely, let the boundary sources $f$ be
\beq
f = \eps_1 \fone + \eps_2 \ftwo 
\eeq
depending on two small parameters $\eps_1, \eps_2$. The solution $u$ of \eqref{eqnon} with boundary source $f$ has an asymptotic expansion
\beq
u = \eps_1 \uone + \eps_2 \utwo + \eps_1^2 u^{(11)} + \eps_2^2 u^{(22)} + \eps_1\eps_2 u^{(12)} + \text{higher order terms in $\eps_1, \eps_2$}.
\eeq
Here, $\uone, \utwo$ are linear responses satisfying the linearized equations 
 \beqq\label{eqlin1}
 \begin{split}
 P u^{(\bullet)}(t, x)
 &= 0, \ \ (t, x)\in \mbr\times  \Omega,\\
 u^{(\bullet)}(t, x) &= f^{(\bullet)}(t, x), \ \ (t, x) \in \mbr_+\times \p \Omega, \\ 
u^{(\bullet)} (t, x) &= 0,   \ \ (t, x) \in \mbr_-\times \Omega,
\end{split}
 \eeqq
where $\bullet  = 1, 2$ and $u^{(11)}, u^{(12)}, u^{(22)}$ are nonlinear responses satisfying 
\beqq\label{eqlin2}
\begin{split}
 Pu^{(ij)}(t, x) 
 &= \nabla \cdot \mcg(u^{(i)}, u^{(j)}), \ \ (t, x) \in \mbr\times  \Omega,\\
 u^{(ij)}(t, x) &= 0, \ \ (t, x) \in \mbr_+\times \p \Omega, \\ 
u^{(ij)} (t, x) &= 0, \ \ (t, x)\in \mbr_-\times  \Omega,
\end{split}
 \eeqq
where $i, j \in \{1, 2\}$ and the term $\mcg$ is quadratic in $u^{(i)}, u^{(j)}$ and comes from the nonlinear terms of \eqref{eqnon}, see \eqref{eqlin3} for its exact form.  

The nonlinear interactions of elastic waves are of great interest in seismology, rock sciences etc. In the literatures e.g.\ \cite{Gol, JK, KSC} among many others, they have been mostly analyzed by taking $\uone, \utwo$ as (smooth) plane waves of the form
\beq
 e^{\imath(- tw + \vec k \cdot x)} \vec a, 
\eeq
 where $\imath^2 = -1$ and $\vec a, \vec k \in \mbr^3$ are the polarization vector and wave vector respectively. The nonlinear responses are recognized as sum or difference harmonics. One disadvantage of the plane wave approach is that the plane waves extend to the whole space hence it becomes difficult to localize the nonlinear interactions.  We shall use distorted plane waves propagating near fixed directions. Locally, they can be expressed as oscillatory integrals of the form
\beq
\int e^{\imath (t, x)\cdot \xi} \vec a(t, x; \xi) d\xi,
\eeq
where the amplitude $\vec a(t, x; \xi)$ belongs to some symbol spaces. The waves and nonlinear responses are characterized using their wave front sets.  We construct proper sources $f^{(\bullet)}$ so that $u^{(\bullet)}$ are conormal distributions. This is done in Section \ref{sec-lin} using microlocal constructions for the initial boundary value problem. The conormal distributions appear frequently in applications,  such as Heaviside functions and impulse functions, see \cite{Ho3} for more examples. Next, we show in Theorem \ref{thmresp} that the nonlinear interactions of $\uone, \utwo$ generates new singularities in $u^{(12)}$. Because of the P--S wave decomposition, there are many cases of the interaction. We are able to determine all the possible responses and find conditions when the responses are non-trivial. The results are summarized in Table \ref{tabinter} in Section \ref{sec-non}.

\subsection{The inverse problem}
 Our next goal  is to determine the elastic parameters from the boundary measurements of the nonlinear responses. We introduce notions to state the result. 
For the linearized equations 
\beq
Pu = \frac{\p^2 u}{\p t^2} - \nabla\cdot \tilde S(x, u) = 0,
\eeq
where $\tilde S(x, u)$ is defined in \eqref{eqlinstress}, the characteristic variety of $P$ is the union of sub-varieties 
 \beqq\label{eqchar}
 \begin{gathered}
 \Sigma_P = \{(\tau, \xi)\in T^*(\mbr\times \overline\Omega):  \tau^2 - \langle \xi, \xi\rangle_P=0\}, \ \  \langle \xi, \xi\rangle_P = (\la(x) + 2\mu(x))|\xi|^2, \\
  \Sigma_S = \{(\tau, \xi)\in T^*(\mbr\times \overline \Omega):  \tau^2 - \langle \xi, \xi\rangle_S =0\},\ \  \langle \xi, \xi\rangle_S = \mu(x) |\xi|^2,
 \end{gathered}
 \eeqq
 which corresponds to shear and compressional waves. We assume that 
 \beqq\label{eqassum}
 \la+ \mu > 0, \quad \mu > 0 \text{ on $\overline\Omega.$}
 \eeqq
 Then the operator $P$ is a system of real principal type (in the sense of Denker \cite{Den}), see \cite[Prop. 4.1]{HU}. We let $g_{P/S}$ be the Riemannian metric on $\overline{\Omega}$ corresponding to $\langle\cdot\rangle_{P/S}$ and let $\text{diam}_{P/S}(\Omega)$ be the diameter of $\Omega$ with respect to $g_{P/S}$. We notice that $\text{diam}_{S}(\Omega) > \text{diam}_P(\Omega)$ in view of \eqref{eqchar} and \eqref{eqassum}.

Using the well-posedness result established in Section \ref{sec-well}, we define the displacement-to-traction map as follows. For any fixed $T_0>0$, we show in Theorem \ref{thmwell} that there exits $\eps_0> 0$ so that for any $f\in C^m([0, T_0]\times \p\Omega)$ supported away from $t=0$ and $f$ sufficiently close to the zero function, there exists a unique solution $u(t, x)$ of \eqref{eqnon}. Then we define the displacement-to-traction map as
\beq
\La_{T_0}: f = u|_{[0, T_0] \times \p \Omega} \rightarrow \nu \cdot S(x, u) |_{[0, T_0] \times \p \Omega}, 
\eeq
where $\nu = \nu(x)$ is the exterior normal to $\p \Omega$. We also use $\La$ for $\La_{T_0}$ when $T_0$ is clear from the context. 

\begin{theorem}\label{main}
Assume that $\p \Omega$ is strictly convex with respect to $g_{P/S}$ and there is no conjugate point for $g_{P/S}$ in $\overline\Omega$. For $T_0> 2\text{diam}_{S}(\Omega)$, the parameters $\la, \mu, A, B$ are uniquely determined in $\overline \Omega$ by $\La_{T_0}$.
\end{theorem}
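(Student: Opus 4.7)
The plan is to peel off the problem in orders of the nonlinearity. First, the linearization $d\La_{T_0}|_{f=0}$ of $\La_{T_0}$ at the trivial boundary datum equals the Dirichlet-to-Neumann map for the linearized elastic system $Pu=0$. Under the assumption of strict convexity of $\p\Omega$ with respect to both $g_P$ and $g_S$ and absence of conjugate points, together with $T_0 > 2\operatorname{diam}_S(\Omega) > 2\operatorname{diam}_P(\Omega)$, every ray of either metric from any interior point reaches the boundary in time less than $T_0$, and the associated travel-time/lens data can be read off $d\La_{T_0}|_0$. This lets one recover $\la$ and $\mu$ on $\overline\Omega$ (by a standard geometric argument for the linear isotropic elastic wave equation using the two characteristic varieties $\Sigma_P$ and $\Sigma_S$ in \eqref{eqchar}); the details are independent of the nonlinear analysis.

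With $\la,\mu$ known, the operator $P$ and the bicharacteristic flow of both wave types are known. The heart of the proof is the second-order derivative
\beq
\partial_{\eps_1}\partial_{\eps_2}\big|_{\eps_1=\eps_2=0} \La_{T_0}(\eps_1 \fone + \eps_2 \ftwo).
\eeq
Expanding the traction $\nu\cdot S(x,u)$ in $\eps_1,\eps_2$, one sees that this mixed derivative equals $\nu\cdot \tilde S(x,u^{(12)})|_Y$ plus a boundary-local quadratic expression in the known linear responses $\uone,\utwo$ and their tangential/normal derivatives on $Y$. The latter is determined by the boundary data $\fone,\ftwo$ and by $\la,\mu,A,B,C$ restricted to $\p\Omega$; boundary values of $A,B,C$ can be read off via compactly supported high-frequency sources following the usual boundary determination argument, so after subtraction we obtain the Neumann trace of $u^{(12)}$ on $Y$.

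Next, I would design $\fone,\ftwo$ as in Section \ref{sec-lin} so that $\uone,\utwo$ are distorted plane waves, conormal to characteristic hypersurfaces of either $P$- or $S$-type, whose bicharacteristics meet transversally at a prescribed interior point $x_0$ along prescribed covectors. By Theorem \ref{thmresp} and Table \ref{tabinter}, the source $\nabla\cdot \mcg(\uone,\utwo)$ generates, at each such interaction point, conormal singularities along the flow-out Lagrangians whose principal symbol at $x_0$ is an explicit expression in the wave vectors, polarizations, and the values of $\la,\mu,A,B,C$ at $x_0$. Since $P$ is of real principal type with no conjugate points, the singularities propagate from $x_0$ unobstructedly to $Y$ inside time $T_0$ (here $T_0>2\operatorname{diam}_S(\Omega)$ is used again), and the principal symbol of the Neumann trace of $u^{(12)}$ on $Y$ determines the principal symbol of the new conormal wave at $x_0$ via the inverse of the (microlocally elliptic, known) parametrix for $P$ along the outgoing bicharacteristic.

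Finally, one varies the two incoming polarizations and directions, and the P/S type of each incident wave, so that $x_0$ sweeps over $\overline\Omega$. For each interior point this yields a family of linear equations in the pointwise values of $A(x_0),B(x_0),C(x_0)$ with coefficients computable from $\la,\mu$ and the chosen wave data. The main obstacle, and the technical core of the argument, is to show that this family has rank exactly two in $(A,B,C)$ at every point, determining $A$ and $B$ while leaving $C$ unconstrained; this is a linear-algebraic verification on the symbol formulas of Theorem \ref{thmresp}, exploiting the splitting of the symbol of $\mcg$ into pieces along and transverse to the polarizations so that the coefficient of $C$, which multiplies $(\tr e)^3$ and therefore only couples to purely longitudinal (pressure) components, drops out of the interactions involving at least one shear wave. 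Selecting such mixed P--S configurations in Table \ref{tabinter} then recovers $A,B$ pointwise on $\overline\Omega$.
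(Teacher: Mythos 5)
Your proposal follows essentially the same route as the paper: linearize to recover $\la,\mu$ via \cite{Ra,HU}, then use the mixed second variation $\p_{\eps_1}\p_{\eps_2}\La$ with two distorted plane waves interacting at a prescribed interior point, read off the principal symbol of the generated conormal wave on the boundary, and solve pointwise linear relations for $A,B$ while observing that $C$ is invisible at this order. Two minor points where the paper is more specific or simpler than your sketch: (i) you propose to remove the boundary-local quadratic term $\nu\cdot(\nabla\cdot\mcg(\uone,\utwo))|_Y$ in \eqref{eqdtnnon} by first determining $A,B,C$ on $\p\Omega$ via high-frequency boundary sources and then subtracting, whereas the paper simply chooses $\gamma_1,\gamma_2$ so that the two incident bicharacteristics do not meet on $\mbr\times\p\Omega$, so this term has wave front contained in $\WF(\uone)\cup\WF(\utwo)$ and contributes no new singularities at the arrival locus of $u^{(12)}$ --- this avoids having to justify boundary determination of the third-order moduli; (ii) rather than establishing a rank count over a family of interactions, the paper picks the single configuration P $+$ SV $\rightarrow$ SV, whose symbol is $\propto(\la+B)\cos\psi+(2\mu+\tfrac12 A)\cos\alpha\cos(\alpha-\psi)$, and the two functionally independent angular factors deliver $\la+B$ and $2\mu+\tfrac12 A$ directly, hence $A,B$ since $\la,\mu$ are already known. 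Your explanation for why $C$ drops out (it multiplies $\tr e$ factors, which are symbolically zero on the divergence-free S polarization, and in the PP case the $\delta_{mn}$ contribution is killed upon projecting to the outgoing S direction) is consistent with Proposition \ref{lmC}.
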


It is worth mentioning that the linear version of Theorem \ref{main} has been extensively studied in the literature. In particular, for the isotropic elastic equations, it is proved in \cite{Ra} and \cite{HU} that the P/S wave speeds (hence the Lam\'e parameters) are uniquely determined by the displacement-to-traction map. Because the linearized problem in our model is isotropic, the main interest here is to determine the nonlinear parameters. We also remark that our proof leads to an explicit way to reconstruct the nonlinear parameters from the measurement with properly chosen boundary sources. Also, we prove in Prop.\ \ref{lmC} that the parameter $C$ cannot be determined at least from the leading term of the generated nonlinear responses. However, it is likely in view of the work \cite{LUW1} that $C$ can be determined from the interaction of three or more waves.  This is not pursued in this work.

\section{The well-posedness for small boundary data}\label{sec-well}
We establish the well-posedness of the initial boundary value problem \eqref{eqnon} which we recall below
\beq 
\begin{gathered}
\frac{\p^2 u(t, x)}{\p t^2}  -  \nabla\cdot  S(x, u(t, x)) = 0, \ \ (t, x) \in \mbr\times \Omega,\\
 u(t, x) = f(t, x), \ \  (t, x) \in \mbr_+ \times \p \Omega,\\
 u(t, x)  =  0, \ \ (t, x) \in \mbr_-\times \Omega,
\end{gathered}
\eeq 
where 
\beq
\begin{gathered}
S_{mn}(x, u) =  \la(x)  e_{jj}  (\delta_{mn} + \frac{\p u_m}{\p x_n}) + 2\mu(x) (e_{nm} + e_{nj} \frac{\p u_m}{\p x_j}) \\
+ A(x)e_{mj}e_{nj} + B(x)(2e_{jj}e_{mn} + e_{ij}e_{ij}\delta_{mn}) + C(x)e_{ii}e_{jj} \delta_{mn}. 
\end{gathered}
\eeq
In the literature, the well-posedness of quasilinear hyperbolic systems are studied for the initial value problem ($\Omega = \mbr^3$) in \cite{HKM} with applications to nonlinear elastodynamics and general relativity. Some variants of the results are obtained by Kato for scalar equations or other initial-boundary conditions. Dafermos and Hrusa studied the initial-boundary value problem for nonlinear elastic equations in \cite{DH} which applies to our model. However, only the short time existence result was established for the Dirichlet boundary conditions. Their result is close to what we need. We shall modify their proof to obtain our result. We refer to \cite{NW} for similar treatments for one dimensional scalar wave equations. 

We denote the $L^p$ based Sobolev space on $\Omega$ of order $m$ by $W^{m, p}(\Omega; \mbr)$.  The compactly supported Sobolev functions are denoted by $W_0^{m,p}(\Omega, \mbr)$. When $p=2$, we also use $H^m(\Omega) = W^{m, 2}(\Omega; \mbr), H^m_0(\Omega) = W^{m, 2}_0(\Omega; \mbr)$. For $f\in C^m(M), M\subset \mbr^4$, we denote the semi-norm by
\beq
\|f\|_{C^m(M)} \doteq \sup_{x\in M} \sum_{|\alpha|\leq m} |\p^\alpha_x f(x)|. 
\eeq

The main result of this section is 
\begin{theorem}\label{thmwell}
Let $T_0 > 0$ be fixed. Assume that $f\in C^m([0, T_0]\times \p \Omega), m\geq3$ is supported away from $t=0$. Then there exists $\eps_0>0$ such that for $\|f\|_{C^m}< \eps_0$, there exists a unique solution 
\beq
u \in \bigcap_{k = 0}^m C^k([0, T_0]; W^{m-k, 2}(\Omega, \mbr))
\eeq
to \eqref{eqnon} and we have the estimates 
\beq
\max_{t\in [0, T_0]}\|\p^{m-k}_t u(t)\|_{W^{m-k, 2}(\Omega)} \leq C_0 \|f\|_{C^m(\mbr\times \p \Omega)}, 
\eeq
where $C_0>0$ does not depend on $f$. 
\end{theorem}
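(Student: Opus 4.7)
The plan is to follow the Dafermos--Hrusa framework \cite{DH} for quasilinear hyperbolic Dirichlet problems and to upgrade their short-time existence to the prescribed interval $[0,T_0]$ by exploiting the smallness of $f$. First I would reduce to zero boundary data. Since $f$ vanishes near $t=0$, a standard extension produces $F\in C^m(\mbr\times\overline{\Omega})$, vanishing for $t<\tau_0$ for some $\tau_0>0$, with $F|_{\mbr\times\p\Omega}=f$ and $\|F\|_{C^m}\lesssim \|f\|_{C^m}$. Setting $u=F+v$, the system \eqref{eqnon} becomes a quasilinear wave equation for $v$ with zero Dirichlet data and zero Cauchy data at $t=0$, with a forcing polynomial in $(F,\nabla F,\p^2 F,v,\nabla v)$. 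The compatibility conditions at $t=0$ hold trivially.

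Next I would rewrite the resulting equation in the form
\beq
\p_t^2 v - A^{ij}\bigl(x, F+v,\nabla(F+v)\bigr)\,\p_i\p_j v = H\bigl(x,F,v,\nabla F,\nabla v,\p^2 F\bigr),
\eeq
where $A^{ij}$ reduces to the principal part of the linear elastic operator $P$ from \eqref{eqlin1} when $v=F=0$. Set $v^{(0)}=0$ and let $v^{(n+1)}$ solve the linear problem obtained by freezing $v^{(n)}$ in $A^{ij}$ and in $H$, with zero boundary and initial data. By \eqref{eqassum} and continuity of the symbol, there is $\delta_0>0$ such that whenever $\|F\|_{C^1}+\|v^{(n)}\|_{C^1}<\delta_0$ the principal part is symmetric hyperbolic, uniformly in $n$; under this condition the linear subproblem is well-posed in $\bigcap_{k=0}^m C^k([0,T_0];W^{m-k,2}(\Omega;\mbr))$ by standard theory for second-order hyperbolic systems with smooth Dirichlet data, cf.\ the appendix of \cite{DH}.

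The heart of the proof is the a priori estimate. Writing $\|\cdot\|_m$ for the norm on the space above, I would differentiate the linear subproblem up to order $m$, apply standard second-order hyperbolic energy inequalities, and control the nonlinear forcing by Moser-type product estimates together with Sobolev embedding (this is where $m\ge 3$ enters, giving $W^{m-1,2}\hookrightarrow C^1$). This should yield
\beq
\|v^{(n+1)}\|_m\le C_1(T_0)\bigl(\|f\|_{C^m}+(\|f\|_{C^m}+\|v^{(n)}\|_m)\,\|v^{(n)}\|_m\bigr),
\eeq
with $C_1(T_0)$ independent of $n$ so long as $\|v^{(n)}\|_m\le R$ for a fixed $R\le\delta_0/2$. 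Choosing $R$ small and then $\eps_0=\eps_0(T_0,R)$ small enough closes the induction: the iterates remain in the ball of radius $R$, the hyperbolicity is preserved uniformly, and the same estimate applied to $v^{(n+1)}-v^{(n)}$ in a lower-order norm (say $\|\cdot\|_1$) delivers a contraction, hence convergence to a limit $v$. A standard weak-$*$ / interpolation argument promotes $v$ to the desired regularity, and the estimate on $u=F+v$ in the theorem is inherited from the iteration bound. Uniqueness follows by applying the identical energy estimate to the difference of two solutions with equal data.

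The main obstacle is making the energy estimate uniform on the prescribed interval $[0,T_0]$: in \cite{DH} the time of existence depends on the size of the data, so one has to pair their construction with the smallness of $f$ so that the quadratic right-hand side is absorbed by a fraction of the left. Care is also required when commuting high-order derivatives with the variable coefficients $A^{ij}(x,F+v,\nabla(F+v))$ and when balancing tangential vs.\ normal boundary regularity, but both are handled by classical Moser-type commutator estimates for $m\geq 3$.
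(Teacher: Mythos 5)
Your proposal follows essentially the same strategy as the paper's proof: extend the boundary data into the interior, reduce to a homogeneous Dirichlet quasilinear problem, set up a Picard-type iteration on linearized problems as in Dafermos--Hrusa \cite{DH}, and exploit the smallness of $f$ to make the iteration close on the entire prescribed interval $[0,T_0]$ rather than only for short time. The paper implements this by modifying the contraction-mapping argument of \cite[Theorem 4.1]{DH} so that for forcing of size $\eps$ the invariant ball has radius $M_0=O(\eps)$ and the contraction constant $CM_0T_0e^{CM_0T_0}$ is below one; your sketch rederives the same quadratic self-improving estimate in slightly more explicit form, so the two arguments coincide at the level of ideas.
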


We make several remarks. We formulate and prove the result specifically suited to our need. The assumption that $f$ is supported away from $t=0$ is for simplicity.  In general, the theorem should work if $f$ satisfies certain compatibility conditions at $\{0\}\times \p \Omega$ with the initial conditions.  The proof of the theorem is based on some modifications of \cite[Theorem 5.2]{DH}. Indeed, the proof in \cite{DH} is quite involved and was build upon an abstract framework.  To minimize the amount of additional work, we will follow \cite{DH} very closely, even their notations. We remark that we have not tried to get sharp results which are not necessary for the inverse problem.

 \bpf[Proof of Theorem \ref{thmwell}]
The first step is to convert the problem to a Dirichlet problem. Suppose that $f\in C^m(\mbr\times \p \Omega)$ with $m\geq 3$ and $f$ is compactly supported in $t> 0.$ We use the Seeley extension, see \cite[Section 1.4]{Me}. Following the arguments there, we can find a function $\widetilde f\in C^m(\mbr\times \Omega)$ such that $\widetilde f|_{\mbr\times \p \Omega} = f$ and $\widetilde f$ is supported in $t> 0$. Moreover, the extension is continuous namely,
\beq
\|\widetilde f\|_{C^m(\mbr\times \Omega)} \leq C \|f\|_{C^m(\mbr\times \p \Omega)}.
\eeq
Hereafter, $C$ denotes a generic constant. Let $u = \widetilde u + \widetilde f$. We have 
\beq
S(u) =   \mca(\widetilde u, \widetilde f) + S(\widetilde f),
\eeq
where 
\beq
 \mca_{mn}(\widetilde u, \widetilde f) = \dfrac{\p \mce(\widetilde u+ \widetilde f)}{\p (\p \widetilde u_m/\p x_n)} = S(\widetilde u) +  \mci(t, x, \widetilde u, \nabla \widetilde u, \widetilde f, \nabla \widetilde f). 
\eeq
Here, $\mci$ is a smooth function of its arguments and we recall that  $\mce$ is the scalar energy function. We can further write 
\beq
\begin{split}
\big(\nabla\cdot \mca(\widetilde u, \widetilde f) \big)_i &= \sum_{j, \alpha, \beta = 1}^3 \widetilde A_{i \alpha j \beta} \frac{\p^2 \widetilde u_j}{\p x_\alpha \p x_\beta}\\
& = (\la(x) + \mu(x))\sum_{j = 1}^3 \frac{\p^2 \widetilde u_j}{\p x_i \p x_j} + \mu(x) \sum_{j = 1}^3 \frac{\p^2 \widetilde u_i}{\p x_j^2} +  E_i(t, x, \widetilde u, \nabla \widetilde u, \widetilde f, \nabla \widetilde f),
\end{split}
\eeq
where $E_{i}$ denotes the nonlinear terms. Because $\widetilde A_{i\alpha j \beta}$ comes from a scalar energy function, 
 we know (see e.g.\ \cite[Section 1]{DH}) that $\widetilde A_{i\alpha j \beta} = \widetilde A_{i \beta j \alpha} $ are symmetric. Moreover, because of the assumptions on $\la, \mu$ and the compactness of $\overline\Omega$, $\widetilde A$ satisfy the strong ellipticity condition, namely there exists $\delta > 0$ such that
\beq
\widetilde A_{i \alpha j \beta} \xi_i \xi_j \zeta_\alpha \zeta_\beta \geq \delta |\xi|^2 |\zeta|^2,  \ \  \xi, \zeta \in \mbr^3,
\eeq
for all $\widetilde u$ in a sufficiently small open neighborhood $\mco$ of the zero function in $C^m(\mbr\times \overline\Omega)$ such that $\det (I + \nabla \widetilde u) > 0$. 

Now  $\widetilde u = u - \widetilde f$  satisfies the equation 
\beqq\label{eqnon1}
\begin{gathered}
 \frac{\p^2 \widetilde u}{\p t^2}  -  \nabla\cdot \mca(\widetilde u, \widetilde f) = \mcf, \ \ \text{ in } \mbr\times \overline \Omega,\\
 \widetilde u = 0, \ \ \text{ in } (\mbr_+\times \p\Omega) \cup (\mbr_-\times \overline\Omega),
\end{gathered}
\eeqq
where 
\beq
\mcf = S(\widetilde f) - \frac{\p^2 \widetilde f}{\p t^2} \in C^{m-2}([0, T]\times \overline \Omega).
\eeq 
It is clear that 
\beq
\|\mcf\|_{C^{m-2}} \leq C \|\widetilde f\|_{C^m} \leq C\|f\|_{C^m}. 
\eeq
 Then the assumptions of \cite[Theorem 5.2]{DH} are all satisfied and the problem can be reduced to the following abstract problem studied in \cite[Section 4]{DH}: for any $T_0 > 0,$ consider the initial value problem
\beqq\label{eqabs1}
\begin{gathered}
 \frac{\p^2 u}{\p t^2}  +  E(t, x,  u, \p  u, f, \p  f) u = F, \ \  \text{ in } [0, T_0]\times \overline \Omega,\\
 u = 0,  \ \ \text{ in } \mbr_-\times \overline \Omega,
 \end{gathered}
\eeqq
where $E$ satisfies the assumptions (E1)-(E4) and $F$ (automatically) satisfies assumptions (g1)-(g2) of \cite[Section 4]{DH}. Here, to conform with the notations in \cite{DH}, we have changed the meaning of  $u$ and $f$ so that they are $C^m$ functions on $\mbr\times \overline\Omega.$ Also, we let $H_s = W^{s, 2}(\Omega, \mbr), V = W^{1, 2}_0(\Omega, \mbr)$ and $X_s = V\cap H_s$.  In \cite[Theorem 4.2]{DH}, the local in time existence was established for this abstract problem. In particular, for any $F\in C^{m}([0, T]\times \overline\Omega)$, there exists a $T_0> 0$ and a unique solution 
\beq
u\in \bigcap_{k = 0}^m C^{m-k}([0, T_0]; X_{m-k}). 
\eeq
The proof of Theorem 5.2 of \cite{DH} follows from this result. Here, we claim that 
 for fixed $T_0>0$, if $F$ is sufficiently small, there exists a unique solution $u$ of \eqref{eqabs1} as above and $\|u\|_{C^m} \leq C \|F\|_{C^m}$. 
The proof of the claim is based on modifying that of Theorem 4.2 of \cite{DH}, which is essentially built upon Theorem 4.1 of \cite{DH} for a simplified version of the problem \eqref{eqabs1} i.e.
\beqq\label{eqabs2}
\begin{gathered}
 \frac{\p^2 u}{\p t^2}  +  A(u) u = F, \ \ \text{ in } [0, T_0]\times \overline \Omega,\\
 u = 0,  \ \ \text{ in } \mbr_-\times \overline \Omega,
 \end{gathered}
\eeqq
where $A$ satisfies the assumptions in Section 4 of \cite{DH}. To clearly indicate the modifications we need, we shall prove our claim for this problem.  

For $M, T > 0$, we define a function space $Z(M, T)$ consisting of all functions $w$ satisfying
\beq
w\in \bigcap_{k = 1}^m W^{k, \infty}([0, T]; H_{m-k}),  \ \ \text{ess-sup}_{t\in [0, T]}\sum_{k=0}^3 \|w(t)\|_{m-k}^2 \leq M^2.  
\eeq
For $w\in Z(M, T_0)$, consider the linearized problem
\beqq\label{eqabslin}
\begin{gathered}
 \frac{\p^2 u}{\p t^2}  +  A(t, x, w) u = F, \ \ \text{ in } [0, T_0]\times \Omega,\\
 u = 0,  \ \ \text{ in } \mbr_-\times \overline \Omega.
 \end{gathered}
\eeqq 
For this problem, Theorem 3.1 of \cite{DH} shows that there exists a unique solution $u\in \bigcap_{k = 1}^m W^{k, \infty}([0, T_0];$ $H_{m-k})$ with the estimate
\beq
\sum_{k = 0}^m \|u(t)\|_{m-k}^2 \leq C_0 N(T_0) e^{K_0T_0}, \ \ t\in [0, T_0], 
\eeq
where $C_0, K_0$ are positive constants depending only on the coefficient of the equation, and 
\beq
N(T_0) = \sup_{t\in [0, T_0]}\sum_{k = 0}^{m-2} \|F(t)\|_{m-2-k}^2. 
\eeq
We observe that $N(T_0) = O(\eps^2)$ if $\|F\|_{C^m} \leq \eps.$ 
We denote by $\mct$ the map which maps $w\in Z(M, T)$ to the solution of \eqref{eqabslin}. We let 
\beq
M_0^2 = 4 N(T_0) C_0 e^{K_0 T_0} = O(\eps)
\eeq 
and choose $\eps$ sufficiently small so that $\mct$ maps $Z(M_0, T_0)$ into itself. Following the rest of proof of \cite[Theorem 4.1]{DH} especially equation (4.37), we see that the map is a contraction if 
\beq
CM_0 T_0 e^{CM_0 T_0} < 1.
\eeq
We choose $\eps_0$ sufficiently small so this is true. This finishes the proof of the claim. This implies the claim for system \eqref{eqabs1} which further concludes the proof of the theorem.
\epf

\section{Microlocal analysis of the linearized system}\label{sec-lin}
We consider the initial boundary value problem for the linearized equation \eqref{eqlin1} recalled below
 \beqq\label{eqlin}
 \begin{gathered}
 P u(t, x) = \frac{\p^2 u(t, x)}{\p t^2}  -  \nabla\cdot \tilde S(x, u(t, x))  = 0, \ \ (t, x)\in \mbr\times  \Omega, \\
 u(t, x)   = f(t, x), \ \ (t, x)\in\mbr_+\times \Omega, \\ 
 u(t, x)   = 0,   \ \ (t, x) \in \mbr_-\times \Omega.
\end{gathered}
 \eeqq
where 
 \beq
 \tilde S_{mn}(x, u) = \la(x) \frac{\p u_j}{\p x_j}  \delta_{mn} + \mu(x)  (\frac{\p u_m}{\p x_n} + \frac{\p u_n}{\p x_m}). 
 \eeq
 Our goal is to construct boundary sources $f$ so that the solution $u$ has conormal type singularities propagating into the region $\Omega.$ Such $u$ will be called {\em distorted plane waves}. We start with basic microlocal analysis for boundary value problems of the linear system. 
 
Let $\xi = (\xi_1, \xi_2, \xi_3) \in \mbr^3$ be the dual coordinate of $x$ in $T^*_x\overline\Omega$ and we let $(t, x; \tau, \xi) \in T^*(\mbr\times \overline\Omega)\backslash 0$ be the local coordinates. The Euclidean metric of $\mbr^3$ is used to define inner product $ \xi\cdot \xi, \forall \xi\in \mbr^3$ and to identify tangent and co-tangent vectors on $\mbr^3$. For a non-zero direction $\xi\in \mbr^3\backslash 0$, we denote by $\pi = \pi(\xi) = \xi\otimes\xi/(\xi\cdot\xi)$ the orthogonal projection to $\xi. $ From \cite[Proposition 4.1]{HU}, we know that $P$ is a system of real principal type (in the sense of Dencker \cite{Den}) with principal symbol
\beq
\textbf{p} = \textbf{p}_S(\id - \pi) + \textbf{p}_P \pi,
\eeq
where 
\beq
\textbf{p}_{P/S}(t, x, \tau, \xi) = \tau^2 - \langle \xi, \xi\rangle_{P/S}.
\eeq
For $\mu>0$, we see from \eqref{eqchar} that $0< \langle \xi, \xi \rangle_{S} < \langle \xi, \xi \rangle_{P}, \xi\in \mbr^3\backslash 0$. It is well-known that the system $P$ can be decoupled as follows. We decompose $u$ to the P/S modes as 
 \beq
 \begin{gathered}
 u^P = \Pi_P u = \lap^{-1} \nabla (\nabla\cdot u) \text{ and }  u^S = \Pi_S  u = (\id - \Pi_P) u,
 \end{gathered}
 \eeq
 where $\nabla = (\p_{x_1}, \p_{x_2}, \p_{x_3})$ is the gradient  and $\lap = \sum_{i = 1}^3 \p_{x_i}^2$ is the Laplacian.  
Observe that the symbols of $\Pi_P, \Pi_S$ are $\sigma(\Pi_P)(x, \xi) = \pi(\xi)$ and $\sigma(\Pi_S)(x, \xi) = \id - \pi(\xi), (x, \xi)\in T^*\mbr^3$. 
It follows from Taylor's diagonalization method \cite{Tay} (see also \cite[Lemma 2.1]{SD}) that  $Pu = 0$ is equivalent to 
 \beqq\label{eqlin3}
 \begin{gathered}
 \frac{\p^2 u^P}{\p t^2}  =   [(\la + 2\mu) \lap + B_1]u^P  + R_1 u,   \\
 \frac{\p^2 u^S}{\p t^2}  =  [\mu  \lap + B_2] u^S  + R_2 u,
  \end{gathered}
 \eeqq
 where $B_1, B_2$ are first order pseudo differential operators (denoted by $\Psi^1(\mbr^3)$) and $R_1, R_2$ are smoothing operators. The boundary data $f$ can be decomposed to $f = f^P+ f^S$ so the system \eqref{eqlin1} is decoupled  up to a smoothing term.
 
For the two symbols $\textbf{p}_{P/S}$,  the corresponding Hamiltonian vector fields are 
\beq
H_{\textbf{p}_{P/S}} = -2\tau \frac{\p }{\p t} + \sum_{i = 1}^3 [\frac{\p \langle \xi, \xi \rangle_{P/S}}{\p \xi_i} \frac{\p }{\p x_i} - \frac{\p \langle \xi, \xi \rangle_{P/S}}{\p x_i} \frac{\p }{\p \xi_i}].
\eeq 
The integral curves on $T^*(\mbr\times \overline \Omega)$ are called bicharacteristics.  For $x\in \p \Omega, \xi \in T_x^*\overline\Omega$, we define the projection $\pi_\p: T_x^*(\mbr\times \overline\Omega)\rightarrow T_x^*(\mbr\times \p\Omega)$ by $\pi_\p(\xi) = \xi|_{T^*_x(\p \Omega)}.$ 
The point $\gamma = (t, x; \tau, \pi_{\p}(\xi))\in T^*(\mbr\times \p\Omega)\backslash 0$ is called {\em elliptic, hyperbolic or glancing} for P/S mode if the following quadratic equation in $z$
\beq
\textbf{p}_{P/S}(t, x; \tau, \xi - z\nu(x)) = 0
\eeq
 has no real roots, two distinct real roots or a double real roots, see \cite[Section 4]{HU}. The cotangent bundle $T^*(\mbr\times \p \Omega)$ is decomposed into elliptic regions $\mce_{P/S}$, hyperbolic regions $\mch_{P/S}$ and the glancing hypersurfaces $\mcg_{P/S}$ for the P/S modes. Because of the assumption that $\mu>0$, it is easy to see that $\mce_S\subset\mce_P$ and $\mch_P\subset\mch_S$. We let $\mcg = \mcg_P\cup \mcg_S.$ A simple real root $z$ is called forward (backward) if the bicharacteristic curve starting in direction $\xi - z\nu$ enters $\mbr\times \Omega$ when time increases (decreases). We denote by $z_{P/S}$ the forward real root or the complex root $z$ with positive imaginary part of $\textbf{p}_{P/S}(t, x, \tau, \xi-z\nu) = 0$, and we use $\xi_{P/S} = \xi - z_{P/S} \nu(x)$.  
 
 Consider the displacement-to-traction map of the linear system \eqref{eqlin1}, that is $\La_{lin}(f) = \nu \cdot \tilde S(u)$. We will see later in \eqref{eqdtnlin} that this is just the linearization of the displacement-to-traction map for the nonlinear system \eqref{eqnon1}. It is proved in \cite[Proposition 4.2]{HU} that $\La_{lin}$ is a first order pseudo-differential operator near every non-glancing point $\gamma \in T^*(\mbr\times \p\Omega)\backslash \mcg.$

For a Lagrangian submanifold $\La$ of $T^*M$ e.g. $M = \mbr\times \Omega$, the Lagrangian distributions of order $\mu$ are denoted by $I^\mu(\La)$, see \cite{Ho4} for the definition. Let $K$ be a codimension $k$ submanifold of $M$. The conormal bundle $N^*K = \{(x, \zeta)\in T^*M\backslash 0: x\in K, \zeta|_{T_xK} = 0\}$ is a Lagrangian submanifold. The conormal distributions of order $\mu$ to $K$ are denoted by $I^\mu(N^*K)$. \\
 
Now let $K$ be a codimension one submanifold of $\mbr\times \p\Omega$ (hence codimension two in $\mbr\times \overline\Omega$).
We use  $N^*_\p K$ to denote the conormal bundle of $K$ as a submanifold of the boundary $\mbr\times \p\Omega$ and $N^*K$ the conormal bundle in $\mbr\times \overline\Omega.$ We assume that $N_\p^*K\cap \mch_P$ has an open interior and consider distributions $f\in I^\mu(N_\p^*K)$. Indeed, we are interested in the singularities of $f$  in the hyperbolic directions.  
We introduce 
\beq
\begin{gathered}
\La^P_K = (\mbr\times \overline\Omega)\cap \big(\bigcup_{s\geq 0}\exp sH_{\textbf{p}_P}(N^*K \cap \Sigma_P) \big), \\
 \La^S_K =  (\mbr\times \overline\Omega) \cap \big( \bigcup_{s\geq 0}\exp sH_{\textbf{p}_S}(N^*K \cap \Sigma_S)\big). 
 \end{gathered}
\eeq
These are Lagrangian submanifolds of $T^*(\mbr\times \overline\Omega)$. Their projections to $\mbr\times \overline\Omega$ are geodesic flow out of $N^*K$ with respect to the Lorentzian metrics $-dt^2 + g_{P/S}$. 
 
\begin{prop}\label{proplin1}
Let $K, f$ be defined as above and $u$ be the solution of \eqref{eqlin1} with boundary source $f$. Let $f = f^P + f^S$ and $u = u^P+ u^S$. We have the following conclusions.
\begin{enumerate}
\item There exists (Fourier integral) operators $Q_{bdy}^{P/S}$ such that $u^{P/S} = Q_{bdy}^{P/S}(f^{P/S}) \in I^{\mu - 1/4}(\La_K^{P/S})$ are Lagrangian distributions. 
\item Let $(z, \zeta)\in T^*(\mbr\times \overline\Omega)$ lie on the bicharacteristic strip of $H_{\textbf{p}_{P/S}}$ from $(z_0, \zeta_{0, P/S})$ for some $(z_0, \zeta_0)\in T^*(\mbr\times \p\Omega)$. Then the principal symbols of $u^{P/S}$ and $f^{P/S}$ are related by 
\beq
\sigma(u^{P/S})(z, \zeta) = Q_{bdy}^{P/S}(z, \zeta, z_0, \zeta_0)\sigma(f^{P/S})(z_0, \zeta_0),
\eeq
where $Q_{bdy}^{P/S}$ are $3\times 3$ invertible matrices and $bdy$ stands for boundary value problem. 
\end{enumerate}
\end{prop}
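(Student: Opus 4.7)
The plan is to decouple the vectorial system into two scalar principal-type Dirichlet problems using the Taylor diagonalization already introduced in (\ref{eqlin3}), and then apply the standard Fourier integral operator parametrix for each scalar Dirichlet boundary value problem. Applying $\Pi_{P/S}$ to both the equation and the boundary data reduces each mode $u^{P/S} = \Pi_{P/S} u$, modulo smoothing, to a scalar second order hyperbolic Dirichlet problem with principal symbol $\textbf{p}_{P/S}$ and Dirichlet data $f^{P/S}$. The hypothesis that $N^*_\p K$ meets $\mch_P$ in an open set (and hence meets $\mch_S$ since $\mch_P \subset \mch_S$) allows us to stay microlocally in the hyperbolic region for both modes, where the boundary problem is of real principal type and the forward lift $\xi_{P/S} = \xi - z_{P/S}\nu$ is well defined.

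For each decoupled scalar Dirichlet problem in the hyperbolic region, classical geometric optics (see e.g.\ \cite{Tay} or the treatment used in \cite{HU}) produces a forward Fourier integral operator parametrix $Q_{bdy}^{P/S}$ whose canonical relation sends $(z_0, \pi_\p(\xi_0))$ to the point reached by applying $\exp sH_{\textbf{p}_{P/S}}$ to the lift $(z_0, \xi_{0,P/S})$. Composing this canonical relation with $N^*_\p K$ yields exactly the flow-out Lagrangian $\La_K^{P/S}$; in the hyperbolic region this composition is clean, so applying $Q_{bdy}^{P/S}$ to $f^{P/S}\in I^\mu(N^*_\p K)$ produces a Lagrangian distribution on $\La_K^{P/S}$. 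The order shift $-1/4$ accounts for the dimension difference between the $4$-dimensional boundary $\mbr\times \p\Omega$ and the $5$-dimensional interior $\mbr\times \overline\Omega$ in H\"ormander's convention for Lagrangian distributions. This yields the membership $u^{P/S} \in I^{\mu - 1/4}(\La_K^{P/S})$ of statement (1).

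For statement (2), the principal symbol propagates by the standard composition rule for Fourier integral operators. The matrix $Q_{bdy}^{P/S}(z,\zeta,z_0,\zeta_0)$ factors naturally as (i) the polarization projector $\pi(\xi)$ for the P mode or $\id - \pi(\xi)$ for the S mode, which identifies the correct polarization subspace at the interior point; (ii) a scalar Dirichlet-to-solution normalization ensuring the boundary trace of the oscillatory ansatz matches $f^{P/S}$ to leading order; and (iii) the propagator of the amplitude transport ODE along the bicharacteristic of $H_{\textbf{p}_{P/S}}$. Each factor is invertible on the relevant polarization subspace by construction, which gives the claimed invertibility of $Q_{bdy}^{P/S}$ as a matrix.

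The main technical obstacle is the careful tracking of these matrix-valued amplitudes through the diagonalization of the system and the boundary-to-interior lift, in particular the verification that the mode projectors $\Pi_{P/S}$ act as genuine pseudodifferential operators on the relevant pieces of the Dirichlet data. Restricting attention to hyperbolic covectors, which is built into the hypothesis on $N^*_\p K$, precludes the conormal directions to $\p\Omega$ where the P and S subspaces can mix and the decoupling in (\ref{eqlin3}) would degenerate, so throughout the construction one stays in the regime where all the ingredients above are smooth and invertible on the appropriate polarization subspace.
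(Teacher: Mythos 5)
Your approach is correct in substance and reaches the same conclusion, but the route differs from the paper's in one key step, and there is a dimension slip worth correcting.

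The paper does not build a Dirichlet-to-solution parametrix directly from geometric optics. Instead, it first converts the Dirichlet boundary value problem into a Cauchy problem by supplementing the given Dirichlet data $f$ with the traction $\La_{lin}(f)$, i.e.\ the Cauchy data become $Cu = (\rho_0 u, \rho_0\nu\cdot\tilde S(u)) = (f, \La_{lin}f)$. It then invokes Duistermaat's Theorem 5.2.1 for the Cauchy problem of a second-order real principal type operator to obtain Fourier integral operators $Q_0\in I^{-1/4}(Z,Y;C_0)$, $Q_1\in I^{-5/4}(Z,Y;C_0)$ with $PQ_i\in C^\infty$, $\rho_0 Q_j = \delta_{0j}$, $\rho_0\La Q_j = \delta_{1j}$, and sets $u = Q_0 f + Q_1 f$. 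The $-1/4$ in the order of $u^{P/S}$ is thus not a heuristic "dimension difference" bookkeeping but literally the FIO order of $Q_0$ in Duistermaat's normalization; the composition rule $Q_0 : I^\mu(N^*_\p K)\to I^{\mu-1/4}(\La_K)$ then hands you the order directly. Your construction of a forward Dirichlet parametrix in the hyperbolic region via the WKB ansatz is also a valid way to produce the same FIO with the same canonical relation, so the two routes are equivalent; the paper's Cauchy reformulation just lets it cite a single theorem and bypass the explicit transport-equation bookkeeping you sketch.

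One concrete correction: you write that $\mbr\times\p\Omega$ is $4$-dimensional and $\mbr\times\overline\Omega$ is $5$-dimensional, but $\Omega\subset\mbr^3$, so the boundary cylinder is $3$-dimensional and the interior cylinder is $4$-dimensional. This does not change the conclusion (the shift is still $-1/4$), but your stated justification for the shift does not survive the corrected count and should be replaced by the precise statement of the FIO order of the parametrix, as above. The remaining ingredients in your argument --- restriction to hyperbolic covectors where $\mch_P\subset\mch_S$, the clean composition of the flow-out relation with $N^*_\p K$, propagation of the principal symbol along the bicharacteristic, and invertibility of the $3\times3$ symbol on the polarization subspace --- all match the paper's proof.
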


\begin{proof}
For simplicity, we use $Z=\mbr\times \overline\Omega$ and $Y = \mbr\times \p \Omega$. Locally near $Y$, we can make a change of variable to flat the boundary. Then the problem \eqref{eqlin1} is equivalent to the Cauchy problem for the second order system $Pu \in C^\infty(Z)$ with Cauchy data 
\beq
Cu =(\rho_0 u, \rho_0 \nabla\cdot\tilde S(u)) = (f,  \La_{lin} (f)),
\eeq
where $\rho_0$ is the restriction operator to $Y.$  In particular, $\rho_0$ is an Fourier integral operator in $I^{1/4}(Z, Y; R_0)$, where the canonical relation
\beq
R_0 = \{((z_0, \zeta_0), (z, \zeta))\in  (T^*Y\times T^*Z)\backslash 0: z_0 = z, \zeta_0 = \pi_\p(\zeta) = \zeta|_{T_{z_0}^*Y}\},
\eeq
see \cite[Section 5.1]{Du}. According to \cite[Theorem 5.2.1]{Du}, there exist Fourier integral operators $Q_0 \in I^{-1/4}(Z, Y; C_0)$ and $Q_1 \in I^{-1-1/4}(Z, Y; C_0)$ which are maps $\mce'(Y)\rightarrow \mcd'(Z)$ such that
\beq
P Q_i \in C^\infty(Z), \ \ \rho_0 Q_j = \delta_{0j}, \ \ \rho_0 \La Q_j = \delta_{1j},\quad i,j = 0, 1,
\eeq
where $C_0$ is the canonical relation 
\beq
\begin{gathered}
\{((z, \zeta), (z_0, \zeta_0)): (z, \zeta) \in T^*Z \text{ is on the bicharacteristic strip of $\textbf{p}$ through some}\\
\text{$(z_0, \hat \zeta)\in T^*Z$ such that $\pi_\p(\hat \zeta) = \zeta_0,$ for $(z_0, \zeta_0)\in T^*Y$}\}.
\end{gathered}
\eeq 
Suppose that $f\in I^\mu(N^*_\p K)$ is conormal. By the composition of Fourier integral operators (see e.g.\ \cite{Ho4}), we have $ Q_0 f  \in I^{\mu-1/4}(\La_K),  Q_1 f \in I^{\mu-1-1/4}(\La_K)$. So the solution $u = Q_0f + Q_1 f \in I^{\mu-1/4}(\La_K)$. Suppose that $((z, \zeta), (z_0, \zeta_0)) \in C_0$, then the principal symbol 
\beq
\sigma(u)(z, \zeta) = \sigma(Q_0)(z, \zeta, z_0, \zeta_0)\sigma(f)(z_0, \zeta_0),
\eeq
where $Q_0$ is invertible. Finally, we apply these arguments to the decoupled system and let $Q_{bdy}^{P/S} = Q_0^{P/S} + Q_1^{P/S} \in I^{-1/4}(Z, Y; C_0^{P/S})$ where 
\beq
\begin{gathered}
C_0^{P/S} = \{((z, \zeta), (z_0, \zeta_0)): (z, \zeta) \in T^*Z\cap \Sigma_{P/S} \text{ is on the bicharacteristic strip of $\textbf{p}_{P/S}$}\\
\text{through some $(z_0, \hat \zeta)\in T^*Z\cap\Sigma_{P/S}$ such that  $\pi_\p(\hat \zeta) = \zeta_0,$ for $(z_0, \zeta_0)\in T^*Y$}\}.
\end{gathered}
\eeq 
This completes the proof. 
\end{proof}

At last, we use the proposition to construct distorted plane waves. Let $\gamma_0 = (t_0, x_0, \tau_0, \xi_0)\in T^*(\mbr\times \p\Omega)\backslash \mcg, t_0 > 0,  x_0\in \p\Omega$ be a hyperbolic point in $\mch_P\subset \mch_S$. We let $K_0$ be a codimension one submanifold of $\mbr\times \p\Omega$ so that $\gamma_0\in N^*_\p K_0$. For $\delta > 0$ sufficiently small, we define 
\beq
K(\gamma_0; \delta) =  K_0 \cap \{(t, x) \in \mbr\times \p \Omega:  |t-t_0|< \delta, \text{dist}(x, x_0) < \delta\},
\eeq
which is a small neighborhood of $(t_0, x_0)$ contained in $K_0.$ 
Then $\Gamma_0(\delta) \doteq N^*_\p K(\gamma_0; \delta)$ is a small open neighborhood of $\gamma_0$ and $\Gamma_0(\delta)\cap \mch_P\neq\emptyset.$  As $\delta\rightarrow 0$, the set $\Gamma_0(\delta)$ tends to the vector $\gamma_0$. Now we consider their flow out  under the Hamilton vector fields of $\textbf{p}_{P/S}$   
\beq
\begin{gathered}
\La^P(\gamma_0; \delta) = \bigcup_{s\geq 0}\exp sH_{\textbf{p}_P}(N^*K(\gamma_0; \delta)\cap \Sigma_P), \\
 \La^S(\gamma_0; \delta) = \bigcup_{s\geq 0}\exp sH_{\textbf{p}_S}(N^*K(\gamma_0; \delta)\cap \Sigma_S). 
 \end{gathered}
\eeq
which are Lagrangian submanifolds of $T^*(\mbr\times \overline \Omega)$. As $\delta\rightarrow 0$, they tend to the forward bicharacteristics corresponding to $\gamma^{P/S}_0 = (t_0, x_0, \tau_0, \xi_{0, P/S})$. By our non-conjugate point assumption, we know that the projections of $\La_0^{P/S}$ to $\mbr\times \overline \Omega$ should be co-dimension one submanifolds $\mcp_0, \mcs_0.$  So we have
\beq
\La^P(\gamma_0; \delta) = N^*\mcp_0, \ \  \La^S(\gamma_0; \delta) = N^*\mcs_0.
\eeq 
Also, as $\delta \rightarrow 0$, $\mcp_0$ tends to the geodesic of the metric $-dt^2 + g_P$ from $\gamma_0^P$ and $\mcs_0$ tends to the geodesic of the metric $-dt^2 + g_S$ from $\gamma_0^S$. 
For $f \in I^{\mu+1/4}(N_\p^*K(\gamma_0; \delta))$, the solution $u$ of \eqref{eqlin1} satisfies $u = u^P + u^S, u^{P}\in I^\mu(N^*\mcp_0), u^S \in I^\mu(N^*\mcs_0)$, which is called a {\em distorted plane wave}.  See Figure \ref{picdistor}. We see that for $\delta$ small, the singular supports of $u^{P/S}$ are close to the corresponding geodesics from $\gamma_0^{P/S}.$ 
 
\begin{figure}[htbp]
\centering
\includegraphics[scale=0.6]{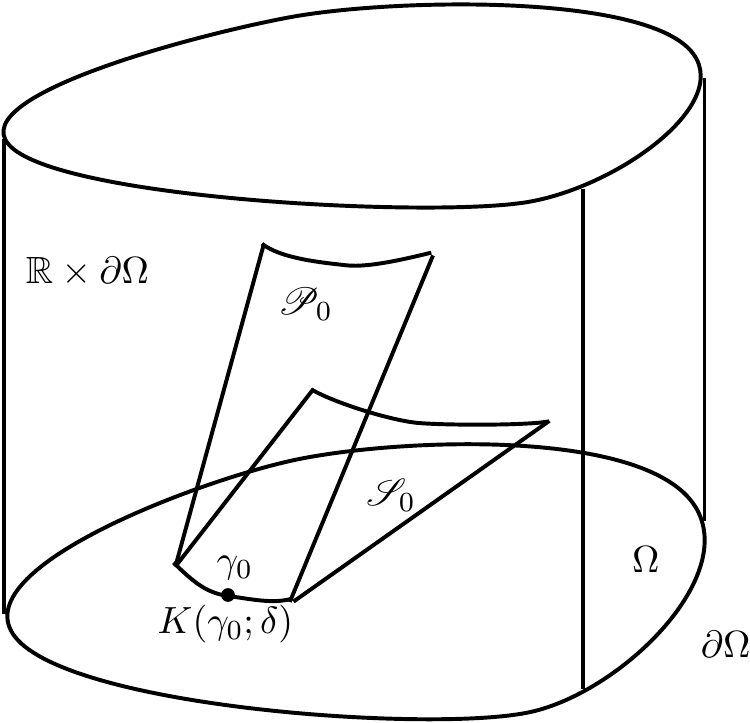}
\caption{Construction of distorted plane waves.  }
\label{picdistor}
\end{figure}
 
\section{The nonlinear interaction}\label{sec-non}
\subsection{Construction of sources}\label{sec-source}
We consider the nonlinear effects in this section. First, we construct two distorted plane waves. 
\begin{definition}\label{def-distor}
Let $\gamma_1, \gamma_2 \in T^*(\mbr_+\times \p \Omega)\backslash \mcg$ be hyperbolic points and construct two sources $\fone \in I^{\mu+ 1/4}(N^*_\p K(\gamma_1; \delta))$ and $\ftwo \in I^{\mu + 1/4}(N^*_\p K(\gamma_2; \delta))$  with $\mu < -23/4$ as in the end of Section \ref{sec-lin}. The corresponding distorted plane waves are denoted by $\uone, \utwo$. We write
\beq
u^{(\bullet)} = u^{(\bullet), S} + u^{(\bullet), P}, \ \ \bullet = 1, 2, 
\eeq
such that
\beq
\begin{gathered}
u^{(\bullet), P} \in I^{\mu}(\La^{P}(\gamma_\bullet; \delta))=  I^{\mu}(N^*\mcp_{\bullet}),\\
u^{(\bullet), S} \in I^{\mu}(\La^{S}(\gamma_\bullet; \delta)) = I^{\mu}(N^*\mcs_\bullet).
\end{gathered}
\eeq
Here, $\mcp_\bullet, \mcs_\bullet$ are codimension one submanifolds of $\mbr\times \overline \Omega.$ We assume that $\mcp_i \cap \mcs_i = \emptyset, i = 1, 2,$ (i.e.\ no self-interactions) and that 
\beq
\begin{gathered}
\mcp_1\cap\mcp_2 = \mcz_{PP}, \quad \mcs_1\cap\mcs_2 = \mcz_{SS}, \quad \mcp_1\cap \mcs_2 = \mcz_{PS}, \quad \mcs_1\cap \mcp_2 = \mcz_{SP},
\end{gathered}
\eeq
where the above intersections are either empty or transversal so the $\mcz_\bullet$ are codimension two submanifolds. 
\end{definition}

We would like to construct a source $f = \eps_1 \fone + \eps_2 \ftwo$ for two small parameters $\eps_1, \eps_2 > 0$ so that the linearized solutions are distorted plane waves. In general, this might lead to reflections of the waves at the boundary and it becomes difficult to determine the nonlinear responses. Therefore, we proceed as follows. 

\begin{prop}\label{propbdata}
For $\fone, \ftwo, \uone, \utwo$ in Def.\ \ref{def-distor} and $\eps_1, \eps_2$ sufficiently small, there exists $f_\eps \in C^2(\mbr \times \p\Omega)$ supported in $\mbr_+\times \p\Omega$ so that the solution $u_\eps$ of \eqref{eqnon} has the expansion
\beqq\label{eqasym}
u = \eps_1 \uone + \eps_2 \utwo + \eps_1^2 u^{(11)} + \eps_2^2 u^{(22)} + \eps_1\eps_2 u^{(12)} + o(\eps_1^2) + o(\eps_2^2),
\eeqq
where $u^{(\bullet)}, \bullet = 11, 12, 22$ are determined by $\uone, \utwo$ through \eqref{eqlin2}. 
\end{prop}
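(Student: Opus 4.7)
The plan is to take $f_\eps = \eps_1 \fone + \eps_2 \ftwo$, use Theorem \ref{thmwell} to produce the exact nonlinear solution $u_\eps$, and then extract the expansion \eqref{eqasym} by subtracting the postulated leading terms one at a time and controlling the final remainder by a linear energy estimate with zero Dirichlet and zero Cauchy data.

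First I would verify admissibility. Each $f^{(\bullet)} \in I^{\mu+1/4}(N_\p^* K(\gamma_\bullet;\delta))$ is conormal on the three-dimensional manifold $\mbr \times \p\Omega$, and with $\mu + 1/4 < -11/2$ the standard Sobolev regularity of conormal distributions yields $f^{(\bullet)} \in H^s_\loc$ for every $s < -(\mu+1/4) - 3/4$; in particular $s$ may be chosen $> 9/2$, so Sobolev embedding gives $f^{(\bullet)} \in C^3$. The supports $K(\gamma_\bullet;\delta)$ are compact in $\mbr_+ \times \p\Omega$ and bounded away from $t=0$ by construction. Choosing $\eps_1, \eps_2$ small enough that $\|f_\eps\|_{C^3} < \eps_0$, Theorem \ref{thmwell} produces a unique $u_\eps$ on $[0, T_0] \times \overline\Omega$ with $\|u_\eps\|_{C^3} \le C(|\eps_1| + |\eps_2|)$.

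Next, Taylor-expand $S(x,u) = \tilde S(x,u) + Q(x;u,u) + R_3(x;u)$ from \eqref{eqsig}, where $Q(x;\cdot,\cdot)$ is a symmetric bilinear form in its last two arguments and $R_3(x;u) = O(|u|^3)$ is smooth. Setting $v_\eps = u_\eps - \eps_1 \uone - \eps_2 \utwo$, the linearized equations \eqref{eqlin1} give
\[
P v_\eps = \nabla \cdot \bigl[Q(u_\eps,u_\eps) + R_3(u_\eps)\bigr],
\]
with vanishing trace on $\mbr_+ \times \p\Omega$ and vanishing initial trace. Comparing \eqref{eqsig} with the source $\mcg$ in \eqref{eqlin2}, one sees that $\mcg(u^{(i)},u^{(j)})$ is (up to symmetrization) the coefficient of $\eps_i \eps_j$ in $Q(u_\eps,u_\eps)$; therefore, setting
\[
w_\eps = v_\eps - \eps_1^2 u^{(11)} - \eps_1 \eps_2 u^{(12)} - \eps_2^2 u^{(22)},
\]
substituting $u_\eps = \eps_1 \uone + \eps_2 \utwo + v_\eps$ into $Q(u_\eps,u_\eps)$, and using \eqref{eqlin2} to cancel the pure $\eps_i \eps_j$ quadratic contributions yields
\[
P w_\eps = \nabla \cdot \bigl[2Q(\eps_1 \uone + \eps_2 \utwo, v_\eps) + Q(v_\eps, v_\eps) + R_3(u_\eps)\bigr],
\]
still with zero boundary and zero initial data.

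Finally, $v_\eps$ has $C^3$-norm of order $O(|\eps_1| + |\eps_2|)$, so the source driving $w_\eps$ is of order $O((|\eps_1| + |\eps_2|)^3)$, and the linear energy estimate for $P$ with homogeneous Dirichlet and zero Cauchy data (the same estimate underlying the proof of Theorem \ref{thmwell}) yields $\|w_\eps\|_{C^2} = O((|\eps_1| + |\eps_2|)^3) = o(\eps_1^2) + o(\eps_2^2)$, which is precisely \eqref{eqasym}. The main obstacle is the middle step: verifying that the symmetric quadratic piece of $S$ read off from \eqref{eqsig} matches the source $\mcg$ appearing implicitly in \eqref{eqlin2}, and arranging the linear estimate of Section \ref{sec-well} in the precise $C^2$ or $C^3$ form required here rather than only in the Sobolev form recorded in Theorem \ref{thmwell}.
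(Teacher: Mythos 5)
Your overall strategy is the right one, and it is essentially the one the paper has in mind: subtract off the postulated linear and quadratic terms, observe that the remainder solves a linear problem with zero Dirichlet and zero initial data, and close with the linearized energy estimate underlying the well-posedness theorem. The paper itself says almost nothing about this mechanism — its written proof consists mostly of the regularity bookkeeping (pinning down $\mu<-23/4$ so that $f^{(\bullet)}\in C^3$ and $u_\eps \in C^2$) and a specific choice of $f_\eps$ — so your proposal fills in the real content. However, there are two gaps, one substantive.

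The substantive gap is in the last step. You record $\|v_\eps\|_{C^3}=O(|\eps_1|+|\eps_2|)$, which is what the two separate bounds on $u_\eps$ and $\eps_\bullet u^{(\bullet)}$ give directly, and then claim the source $\nabla\cdot\bigl[2Q(\eps_1\uone+\eps_2\utwo,v_\eps)+Q(v_\eps,v_\eps)+R_3(u_\eps)\bigr]$ driving $w_\eps$ is $O((|\eps_1|+|\eps_2|)^3)$. It is not: with only $v_\eps=O(\eps)$, the cross term $2Q(\eps_1\uone+\eps_2\utwo,v_\eps)$ is $O(\eps^2)$, which gives $w_\eps=O(\eps^2)$ rather than $o(\eps^2)$. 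You need an intermediate bootstrap: first apply the linear energy estimate to the equation $Pv_\eps=\nabla\cdot[Q(u_\eps,u_\eps)+R_3(u_\eps)]$ with zero boundary and zero initial data (whose right-hand side is $O(\eps^2)$) to upgrade to $v_\eps=O(\eps^2)$; only then is the source for $w_\eps$ of order $\eps^3$, and the final estimate closes.

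A smaller issue is the assertion $\|u_\eps\|_{C^3}\le C(|\eps_1|+|\eps_2|)$ "from Theorem \ref{thmwell}". With $m=3$ that theorem yields $u_\eps\in\bigcap_{k\le 3}C^k([0,T_0];H^{3-k}(\Omega))$, which by Sobolev embedding on a three-dimensional $\Omega$ gives at best $C^1$-type joint control, not $C^3$. The paper itself only claims $u_\eps\in C^2$, and to get even that requires stepping $s$ up to $5$, hence $\mu<-23/4$. You flag this as "the main obstacle" at the end, which is honest, but as written the $C^3$ bound you invoke earlier is not available; the argument should be rephrased in the mixed Sobolev norms that Theorem \ref{thmwell} actually delivers (or $m$ raised and the constraint on $\mu$ tightened accordingly).

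Finally, a non-essential difference from the paper: you take $f_\eps=\eps_1\fone+\eps_2\ftwo$, whereas the paper takes $f_\eps=\eps_1\uone|_{\mbr\times\p\Omega}+\eps_2\utwo|_{\mbr\times\p\Omega}+f_{\eps^2}$ with a higher-order correction $f_{\eps^2}$ involving the traces of $u^{(11)},u^{(12)},u^{(22)}$. The two agree to leading order, and your simpler choice is perfectly adequate to prove the proposition as stated; the paper's more elaborate $f_\eps$ is motivated by the concern, stated just before the proposition, about controlling boundary reflections of the nonlinear correctors for the later microlocal analysis, not by anything needed for the expansion \eqref{eqasym} itself.
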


In the expansion \eqref{eqasym}, we let $v = \eps_1\uone + \eps_2\utwo$ and call it the {\em linear response}. The terms $u^{(11)}, u^{(22)}$ and $u^{(12)}$ are called {\em nonlinear responses}. We are particularly interested in $u^{(12)}$ as we shall show below that it contains new singularities which do not belong to the linear response. 
The point of the proposition is revealed in the displacement-to-traction map $\La$. We have
\beqq\label{eqdtnlin}
\p_{\eps_i}\La(f_\eps)|_{\eps_i = 0} =  \nu\cdot \tilde S(u^{(i)})|_{\mbr\times \p\Omega} = \La_{lin}(f^{(i)}), \ \ i = 1, 2,
\eeqq
and
\beqq\label{eqdtnnon}
\p_{\eps_1}\p_{\eps_2}\La(f_\eps)|_{\eps_1=\eps_2=0} = \nu\cdot \tilde S(u^{(12)})|_{\mbr\times \p\Omega} + \nu\cdot (\nabla\cdot\mcg(u^{(1)}, u^{(2)}))|_{\mbr\times \p\Omega},
\eeqq
where $\mcg(\cdot, \cdot)$ is the quadratic term in \eqref{eqlin2}, see also \eqref{eqlin3}. 

\bpf[Proof of Prop.\ \ref{propbdata}]
For $\eps_1, \eps_2> 0$, we take 
\beq
f_{\eps} = \eps_1\uone|_{\mbr\times \p\Omega} + \eps_2\utwo|_{\mbr\times \p\Omega} +  f_{\eps^2},
\eeq
where $\tilde f_{\eps^2}$ consists of higher order terms in $\eps_1, \eps_2$ and is to be specified below. From the finite speed of propagation for the linear system, we see that $f_{\eps} =  \eps_1\fone+\eps_2\ftwo$ modulo higher order terms in a sufficiently small neighborhood of $\gamma_1, \gamma_2$.  Now consider the regularity. Recall that for a codimension $k$ submanifold $K$ of $M$ of dimension $n$, we have
\beqq\label{eqspace}
I^\mu(N^*K) \subset H^s(M) \subset C^r(M),
\eeqq
where $s < -\mu - n/4$ and $r < s-n/2$. We should take $\mu < - 9/2$ so that $\fone, \ftwo \in C^2(\mbr\times \p \Omega)$. We apply Theorem \ref{thmwell}. For any $T_0>0$, there exists $\eps_0> 0$ such that for $\eps_1, \eps_2 < \eps_0$, there exists a unique solution $u_\eps$ of \eqref{eqnon1} with boundary source $f_\eps$ such that 
\beq
u_\eps \in E^2(\mbr\times \overline \Omega) \doteq \bigcap_{k = 0}^2 C^k([0, T_0]; H^{2-k}(\overline \Omega)) \subset H^2(\mbr\times \overline\Omega)
\eeq
and we have the asymptotic expansion \eqref{eqasym} 
in which the remainder terms are also in $E^2(\mbr\times \overline \Omega)$.  Now we need more regularity so that $u_\eps\in C^2(\mbr\times \overline \Omega)$. Thus, we demand that in \eqref{eqspace} $s = 5$ and $\mu< -5- 3/4 = -23/4.$ Then  we let 
\beq
f_{\eps^2} = [\eps_1^2 u^{(11)} + \eps_2^2 u^{(22)} + \eps_1\eps_2 u^{(12)}]|_{\mbr\times \p\Omega} + f_{\eps^3},
\eeq
where $f_{\eps^3} \in C^2(\mbr\times \p\Omega)$ and $f_{\eps^3}= o(\eps_1^2) + o(\eps_2^2).$ We see that $f_{\eps^3}$ will not affect the terms in the asymptotic expansion \eqref{eqasym}. This finishes the proof.
\epf

We remark that since we will only concern $\gamma_1, \gamma_2$ so the corresponding  bicharacteristics do not meet at the boundary, the wave front of $\nu\cdot (\nabla\cdot\mcg(u^{(1)}, u^{(2)}))|_{\mbr\times \p\Omega}$ in \eqref{eqdtnnon}  will be contained in that of $\uone$ and $\utwo$. Thus it suffices to find the singularities of $\nu\cdot \tilde S(u^{(12)})|_{\mbr\times \p\Omega}$ in $u^{(12)}$ which we   do next.

\subsection{Generation of the nonlinear response} 
Among all the nonlinear terms in \eqref{eqnon}, we only consider the quadratic terms in $S(u)$, denoted by $G(u, u)$ where
 \beqq\label{eqG}
 \begin{split}
 G_{mn}(u, w)& =    \la  \tilde e_{jj}  \frac{\p w_m}{\p x_n} + \ha \la (\frac{\p u_k}{\p x_j})  (\frac{\p w_k}{\p x_j})  \delta_{mn} + 2\mu  \tilde e_{nj} \frac{\p w_m}{\p x_j} + \mu \frac{\p u_k}{\p x_m} \frac{\p w_k}{\p x_n} \\
 &+ A\tilde e_{mj}\tilde f_{nj}  +B(2\tilde e_{jj}\tilde f_{mn} + \tilde e_{ij}\tilde f_{ij}\delta_{mn}) + C  \tilde e_{ii}\tilde f_{jj} \delta_{mn},
 \end{split}
 \eeqq
 where $\tilde f_{mn} = \ha (\dfrac{\p w_m}{\p x_n} + \dfrac{\p w_n}{\p x_m})$. Because $G(u, w)$ is not symmetric, we let 
\beq
\mcg(u, w) = G(u, w) + G(w, u). 
\eeq
Then we see that for $v = \eps_1\uone + \eps_2\utwo$, 
\beq
G(v, v) = \eps_1^2 \ha \mcg(\uone, \uone) +  \eps_2^2 \ha \mcg(\utwo, \utwo) +  \eps_1\eps_2\mcg(\uone, \utwo).
\eeq
Thus $u^{(12)}$ is the solution of
\beqq\label{eqlin3}
\begin{split}
 Pu^{(12)} =  \frac{\p^2 u^{(12)}}{\p t^2}  - \nabla\cdot \tilde S(u^{(12)}) &= \nabla \cdot \mcg(\uone, \utwo)
\end{split}
 \eeqq 
 with zero initial and boundary conditions. This is the precise form of the equation \eqref{eqlin2}. If we choose the parameter $\delta$ in the distorted plane waves sufficiently small,  $\mcg(\uone, \utwo)$ is compactly supported in $\mbr_+\times \overline\Omega$. Thus by the finite speed of propagation, we can treat \eqref{eqlin3} as a source problem on $\mbr\times \mbr^3$ before the waves reaches the boundary. Although this is not necessary for our proof,  it is worth mentioning that in \cite{Ra0}  Rachele showed the determination of $\la, \mu$ and their normal derivatives to any order on the boundary $\mbr\times \p \Omega$ from $\La_{lin}$. Thus one can ignore the boundary and extend the the system \eqref{eqlin3} to $\mbr\times \mbr^3$. Because of the P/S decomposition, we  have
 \beq
\begin{split}
 \mcg(u^{(1)}, u^{(2)}) &= \mcg(u^{(1), P}, u^{(2), P})+ \mcg(u^{(1), P}, u^{(2), S})+ \mcg(u^{(1), S}, u^{(2), P}) + \mcg(u^{(1), S}, u^{(2), S})\\
 & = \mcg^{PP} + \mcg^{PS} + \mcg^{SP}+ \mcg^{SS},
 \end{split}
 \eeq  
where the $\mcg^\bullet$ in the second line corresponds to the four terms in the first line. These terms  represent the P--P interactions, P--S interactions, S--P interactions and S--S interactions. Their singularities can be described using the notion of paired Lagrangian distributions.   Let $M$ be an $n$-dimensional smooth manifold. For two Lagrangians $\La_0, \La_1 \subset T^*M$ intersecting cleanly at a co-dimension $k$ submanifold i.e. $T_q\La_0\cap T_q\La_1  = T_q(\La_0\cap \La_1),\ \ \forall q\in \La_0\cap \La_1,$ the paired Lagrangian distribution associated with $(\La_0, \La_1)$ is denoted by $I^{p, l}(\La_0, \La_1)$. The wave front sets of such distributions are contained in $\La_0\cup \La_1.$ We refer the reader to \cite{MU, GrU93} for the precise definition and properties. 

Now consider $\mcg_{PP}$ and assume $\mcz_{PP}\neq\emptyset$. From \cite[Lemma 4.1]{WZ}, we know that the components of $\nabla u^{(\bullet), P}$ are in $I^{\mu+1}(\La^P_\bullet), \bullet = 1, 2$. Then we can apply \cite[Lemma 2.1]{GrU93} to get
\beq
\mcg_{PP} \in I^{\mu+1, \mu+2}(N^*\mcz_{PP}, N^*\mcp_1) + I^{\mu+1, \mu+2}(N^*\mcz_{PP}, N^*\mcp_2).
\eeq
Using \cite[Lemma 4.1]{WZ} again, we get 
 \beq
\nabla\cdot \mcg_{PP} \in I^{\mu+2, \mu+2}(N^*\mcz_{PP}, N^*\mcp_1) + I^{\mu+2, \mu+2}(N^*\mcz_{PP}, N^*\mcp_2).
\eeq
 The wave front set of $\mcg^{PP}$ is contained in the union of $N^*\mcz_{PP}$ and $N^*\mcp_1, N^*\mcp_2$. For the propagation of the nonlinear response, we are interested in the co-vectors  of $N^*\mcz_{PP}$ which are also in $\Sigma_P$ or $\Sigma_S$. 
 \begin{lemma}\label{lmint1}
Suppose that $\mcp_1$ intersect $\mcp_2$ transversally at $\mcz_{PP} \neq \emptyset$. Then 
\begin{enumerate}
\item $(N^*\mcz_{PP} \backslash (N^*\mcp_1\cup N^*\mcp_2))\cap \Sigma_P  = \emptyset$. 
\item For any $p\in \mcz_{PP}$, there are two linearly independent vectors $\zeta_+, \zeta_-\in \Sigma_S \cap N^*\mcz_{PP}$ at $p$. 
\end{enumerate}
 \end{lemma}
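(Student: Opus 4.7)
The plan is to reduce everything to linear algebra on the two-dimensional plane $N^*_p\mcz_{PP}$ and then two quadratic identities coming from the characteristic varieties. Fix $p\in\mcz_{PP}$ and pick nonzero covectors $\eta_i=(\tau_i,\xi_i)\in N^*_p\mcp_i\subset\Sigma_P$ for $i=1,2$; because the $\La^P(\gamma_i;\delta)$ are flowouts inside $\Sigma_P$, these satisfy $\tau_i^2=(\lambda+2\mu)|\xi_i|^2$. Transversality of $\mcp_1$ and $\mcp_2$ at $p$ forces $\eta_1$ and $\eta_2$ to be linearly independent, and then $N^*_p\mcz_{PP}=\mathrm{span}\{\eta_1,\eta_2\}$ since $T_p\mcz_{PP}=T_p\mcp_1\cap T_p\mcp_2$.

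For part (1), I would plug a general element $a\eta_1+b\eta_2=(a\tau_1+b\tau_2,\,a\xi_1+b\xi_2)$ into the defining equation of $\Sigma_P$. Using $\tau_i^2=(\lambda+2\mu)|\xi_i|^2$, the $a^2$ and $b^2$ terms cancel and one is left with
\begin{equation*}
2ab\bigl[\tau_1\tau_2-(\lambda+2\mu)\,\xi_1\!\cdot\!\xi_2\bigr]=0.
\end{equation*}
Since $|\tau_1\tau_2|=(\lambda+2\mu)|\xi_1||\xi_2|$, Cauchy--Schwarz gives $|(\lambda+2\mu)\xi_1\!\cdot\!\xi_2|\le|\tau_1\tau_2|$ with equality only when $\xi_1\parallel\xi_2$, and I would then check that the \emph{signed} equality $\tau_1\tau_2=(\lambda+2\mu)\xi_1\!\cdot\!\xi_2$ forces $\eta_2=\alpha\eta_1$ for some $\alpha\in\mbr$, contradicting transversality. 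Hence the bracket is nonzero, so $ab=0$, i.e. the covector lies in $N^*_p\mcp_1\cup N^*_p\mcp_2$.

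For part (2), substituting the same general element into $\tau^2=\mu|\xi|^2$ and again using $\tau_i^2=(\lambda+2\mu)|\xi_i|^2$ yields the homogeneous quadratic
\begin{equation*}
(\lambda+\mu)|\xi_1|^2\,a^2+2\bigl[\tau_1\tau_2-\mu\,\xi_1\!\cdot\!\xi_2\bigr]ab+(\lambda+\mu)|\xi_2|^2\,b^2=0
\end{equation*}
in $(a:b)\in\mathbb{RP}^1$. Its discriminant equals
\begin{equation*}
\tfrac{D}{4}=\mu\Bigl[(2\lambda+3\mu)|\xi_1|^2|\xi_2|^2-2\tau_1\tau_2\,\xi_1\!\cdot\!\xi_2+\mu(\xi_1\!\cdot\!\xi_2)^2\Bigr],
\end{equation*}
where I used $(\lambda+2\mu)^2-(\lambda+\mu)^2=\mu(2\lambda+3\mu)$. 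Parametrising $\tau_1\tau_2=\epsilon(\lambda+2\mu)|\xi_1||\xi_2|$ with $\epsilon=\pm1$ and $\xi_1\!\cdot\!\xi_2=c|\xi_1||\xi_2|$ with $c\in[-1,1]$, the bracket factors as $\mu(c-\epsilon)\bigl(c-\epsilon(2\lambda+3\mu)/\mu\bigr)\cdot|\xi_1|^2|\xi_2|^2$. Since $(2\lambda+3\mu)/\mu>1$ under the assumption $\lambda+\mu>0,\ \mu>0$, the second factor never vanishes on $[-1,1]$, while $c=\epsilon$ is precisely the case $\eta_1\parallel\eta_2$ ruled out by transversality. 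Therefore $D>0$, the quadratic has two distinct real roots $(a_\pm:b_\pm)$, and the two corresponding covectors $\zeta_\pm=a_\pm\eta_1+b_\pm\eta_2$ are linearly independent elements of $N^*_p\mcz_{PP}\cap\Sigma_S$.

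The main obstacle is the positivity of the discriminant in step~(2): it is easy to get the wrong sign convention and conclude negativity. The cleanest route is the factorisation above, since it pinpoints exactly why the Lam\'e assumptions \eqref{eqassum} together with transversality suffice, and makes transparent the (expected) degeneracy in the limiting parallel cases.
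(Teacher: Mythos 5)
Your argument is correct and follows essentially the same route as the paper: substitute a general element $a\eta_1+b\eta_2$ of $N^*_p\mcz_{PP}$ into the $P$ and $S$ characteristic equations, use $\tau_i^2=(\lambda+2\mu)|\xi_i|^2$ to reduce to a quadratic in $(a\!:\!b)$, and check the sign of the discriminant. The one genuine improvement in your write-up is that you keep track of the sign $\epsilon=\mathrm{sgn}(\tau_1\tau_2)$ rather than silently normalising $|\xi^i|=1$ and $\tau_1\tau_2>0$ as the paper does, and your Cauchy--Schwarz argument in part~(1) cleanly identifies the degenerate case as $\eta_1\parallel\eta_2$ (the paper's condition ``$\xi^1\cdot\xi^2\neq 0$'' there appears to be a typo for ``$\neq 1$''), so your factorisation $\mu(c-\epsilon)\bigl(c-\epsilon(2\lambda+3\mu)/\mu\bigr)$ makes the role of the hypotheses $\lambda+\mu>0,\ \mu>0$ and transversality completely transparent.
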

 \bpf 
 We remark that (1) is a known fact, but we give an elementary proof below for completeness. Let $p = (t, x) \in \mcz_{PP}$ and $\zeta^{(1)}\in N_p^*\mcp_1, \zeta^{(2)}\in N_p^*\mcp_2$. We write $\zeta^{(i)} = (\tau^{i}, \xi^{i}), \tau^{i}\in \mbr, \xi^{i}\in \mbr^3, i = 1, 2$. Then we have
\beq
\begin{gathered}
 (\tau^i)^2 = (\la + 2\mu) |\xi^i|^2, \ \ i = 1, 2.
\end{gathered}
\eeq
Now consider vectors $\zeta = a\zeta^{(1)} + b\zeta^{(2)} \in N^*\mcz_{PP}, a, b\in\mbr$. Without loss of generality, we assume that $a \neq 0$ and rescale the vectors so  that $|\xi^{(\bullet)}|=1$ and $a = 1$. If $\zeta\in \Sigma_P$, we have
\beq
\begin{gathered}
 ( \tau^1 + b\tau^2)^2 = (\la + 2\mu) | \xi^1 + b\xi^2|^2 
\Rightarrow b(1-   \xi^1\cdot \xi^2) = 0.
\end{gathered}
\eeq  
Because $\xi^1\cdot\xi^2 \neq 0$, we conclude that $b = 0$ which implies $\zeta = \zeta^{(1)}$. 
If $\zeta \in \Sigma_S$, we must have
\beq
\begin{gathered}
 ( \tau^1 + b\tau^2)^2 = \mu | \xi^1 + b\xi^2|^2  \\
\Rightarrow (\la + \mu)b^2 + 2((2\mu +\la) - \mu \xi^1\cdot \xi^2)) b + (\la+ \mu) = 0.
\end{gathered}
\eeq  
Because we assumed $\la+ \mu > 0$, the equation above is quadratic and the determinant is positive if $\xi^1\cdot \xi^2\neq 1$, which is automatically true by the transversal intersection assumption. In this case, we get two real distinct roots $b_+, b_-$ and two co-vectors in $\Sigma_S\cap N^*\mcz_{PP}$
 \beq
 \zeta^+ = \zeta^{(1)} + b_+ \zeta^{(2)}, \ \ \zeta^- = \zeta^{(1)} + b_- \zeta^{(2)}.
 \eeq
\epf

Similarly, we have 
\begin{lemma}\label{lmint2}
Assume that $\mcp_1$ intersects $\mcs_2$ transversally at $\mcz_{PS} \neq \emptyset$. Then 
\beq
\nabla\cdot \mcg_{PS} \in I^{\mu+2, \mu+2}(N^*\mcz_{PS}, N^*\mcp_1) + I^{\mu+2, \mu+2}(N^*\mcz_{PS}, N^*\mcs_2).
\eeq
For any $p\in \mcz_{PS}$, there exists a unique $\zeta_+\in \Sigma_P\backslash  N^*\mcp_1$ and $\zeta_-\in \Sigma_S\backslash N^*\mcs_2$ at $p$. The same conclusion holds for $\mcg_{SP}$.
\end{lemma}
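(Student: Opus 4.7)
The plan is to follow the two-step scheme of Lemma \ref{lmint1}: first obtain the paired-Lagrangian regularity of $\nabla\cdot\mcg_{PS}$ from the clean intersection of $N^*\mcp_1$ and $N^*\mcs_2$, and then analyze algebraically which covectors in $N^*_p\mcz_{PS}$ satisfy $\textbf{p}_P=0$ or $\textbf{p}_S=0$. For the regularity statement, Lemma 4.1 of \cite{WZ} places $\nabla u^{(1),P}\in I^{\mu+1}(N^*\mcp_1)$ and $\nabla u^{(2),S}\in I^{\mu+1}(N^*\mcs_2)$. Transversality of $\mcp_1$ and $\mcs_2$ along $\mcz_{PS}$ makes $N^*\mcp_1$ and $N^*\mcs_2$ meet cleanly in $T^*(\mbr\times\overline\Omega)$, so Lemma 2.1 of \cite{GrU93} applied term by term to $\mcg(u^{(1),P},u^{(2),S})+\mcg(u^{(2),S},u^{(1),P})$ puts $\mcg_{PS}$ in $I^{\mu+1,\mu+2}(N^*\mcz_{PS},N^*\mcp_1)+I^{\mu+1,\mu+2}(N^*\mcz_{PS},N^*\mcs_2)$, and one further application of Lemma 4.1 of \cite{WZ} for the divergence raises each order by one to yield the desired $I^{\mu+2,\mu+2}$ membership.

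For the directional statement, fix $p\in\mcz_{PS}$ and pick $\zeta^{(1)}=(\tau^1,\xi^1)\in N^*_p\mcp_1\cap\Sigma_P$ and $\zeta^{(2)}=(\tau^2,\xi^2)\in N^*_p\mcs_2\cap\Sigma_S$, normalized to $|\xi^1|=|\xi^2|=1$, so $(\tau^1)^2=\la+2\mu$ and $(\tau^2)^2=\mu$. Transversality makes $\zeta^{(1)}$ and $\zeta^{(2)}$ linearly independent and spanning $N^*_p\mcz_{PS}$; pure multiples of $\zeta^{(2)}$ lie in $N^*\mcs_2$ (excluded from the $\Sigma_S$ count) and cannot lie in $\Sigma_P$ unless they vanish because $\la+\mu\neq 0$. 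Parametrizing the remaining candidates as $\zeta=\zeta^{(1)}+b\zeta^{(2)}$ and substituting into $\textbf{p}_P(p,\zeta)=0$ produces, after cancellation with the normalizations, the factored quadratic
\beq
b\bigl\{2[\tau^1\tau^2-(\la+2\mu)\,\xi^1\cdot\xi^2]-b(\la+\mu)\bigr\}=0,
\eeq
whose roots are $b=0$ (recovering $\zeta^{(1)}\in N^*\mcp_1$) and, since $\la+\mu>0$, a single nonzero $b_+$; the resulting $\zeta_+=\zeta^{(1)}+b_+\zeta^{(2)}$ is the asserted unique covector in $\Sigma_P\setminus N^*\mcp_1$, and the independence of $\zeta^{(1)},\zeta^{(2)}$ prevents it from re-entering $N^*\mcp_1$. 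Substituting into $\textbf{p}_S(p,\zeta)=0$ instead, the $b^2$-terms cancel and leave the \emph{linear} relation
\beq
(\la+\mu)+2b[\tau^1\tau^2-\mu\,\xi^1\cdot\xi^2]=0,
\eeq
whose unique root $b_-$ is automatically nonzero because $\la+\mu>0$, producing the asserted $\zeta_-\in\Sigma_S\setminus N^*\mcs_2$. The statement for $\mcg_{SP}$ follows at once by interchanging the roles of the two incoming waves.

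The only delicate point is that the P–vs–S algebra has switched character relative to Lemma \ref{lmint1}: there the $\Sigma_P$ condition was degenerate and the $\Sigma_S$ condition was genuinely quadratic with two new roots, whereas exchanging one incoming P wave for an S wave matches the leading $\mu$-coefficient on both sides of the $\Sigma_S$ equation and drops it to linear, while the $\Sigma_P$ equation becomes a quadratic with one trivial root on $N^*\mcp_1$ and one genuinely new root. Recognizing this switch is the one conceptual step; the remaining (implicit) genericity needed is $b_+\neq 0$, i.e.\ $\tau^1\tau^2\neq(\la+2\mu)\xi^1\cdot\xi^2$, which holds for generic choices of $\zeta^{(1)},\zeta^{(2)}$.
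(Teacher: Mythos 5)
Your proof follows the paper's argument essentially verbatim: the paired-Lagrangian membership comes from \cite[Lemma 4.1]{WZ} and \cite[Lemma 2.1]{GrU93}, and the directional claim is obtained by writing $\zeta=\zeta^{(1)}+b\zeta^{(2)}$, substituting into $\textbf{p}_P=0$ and $\textbf{p}_S=0$, and reading off the quadratic (with the trivial root $b=0$) and linear equations in $b$ respectively. You are in fact a bit more careful than the paper on two points it passes over silently: you dispose of the $a=0$ ray (pure multiples of $\zeta^{(2)}$ are never in $\Sigma_P$ since $\textbf{p}_P(\zeta^{(2)})=-(\la+\mu)\neq 0$), and you note that the nontriviality $b_+\neq 0$ of the new $\Sigma_P$ root requires the genericity condition $\tau^1\tau^2\neq(\la+2\mu)\,\xi^1\cdot\xi^2$, which the paper tacitly assumes.
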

\bpf 
Let $(t, x) \in \mcz_{PS}$ and $\zeta^{(1)}\in N^*\mcp_1, \zeta^{(2)}\in N^*\mcs_2$. We write $\zeta^{(i)} = (\tau^{i}, \xi^{i}), \xi^{i}\in \mbr^3, |\xi^i|=1, i = 1, 2$. Then we have
\beq
  (\tau^1)^2 = (\la + 2\mu) |\xi^1|^2, \ \ (\tau^2)^2 = \mu |\xi^2|^2.
\eeq
Now consider vectors $\zeta = \zeta^{(1)} + b\zeta^{(2)} \in N^*\mcz_{PS},  b\in\mbr$. If $\zeta \in \Sigma_P$, we must have
\beq
\begin{gathered}
 ( \tau^1 + b\tau^2)^2 = (\la +2\mu) | \xi^1 + b\xi^2|^2 \\
 \Rightarrow b^2 (\la+\mu) + 2b (\xi^1 \cdot \xi^2(\la+ 2\mu) -\sqrt{\mu(\la+2\mu)}) = 0.
\end{gathered}
\eeq  
The equation has two real solutions. One is $b = 0$ corresponding to the P vector $\zeta^{(1)}$ and $b_P\neq0$ corresponding to a new vector in $\Sigma_P$. Now consider the vector $\zeta$ in $\Sigma_S.$ We arrive at the equation
 \beq
\begin{gathered}
 ( \tau^1 + b\tau^2)^2 =  \mu | \xi^1 + b\xi^2|^2  \\
\Rightarrow 2b(\sqrt{(\la + 2\mu)\mu} - \mu\xi^1\cdot \xi^2) + (\la + \mu) = 0.
\end{gathered}
\eeq  
So we get one non-trivial solution $b_S$. Thus, we conclude that $N^*\mcz_{PS}$ has one P vector and one S vector. Similar conclusion holds for $\mcg^{SP}.$
\epf

Finally, we have
\begin{lemma}\label{lmint3}
Assume that $\mcs_1$ intersects $\mcs_2$ transversally at $\mcz_{SS} \neq \emptyset$. Then 
\beq
\nabla\cdot \mcg_{SS} \in I^{\mu+2, \mu+2}(N^*\mcz_{SS}, N^*\mcs_1) + I^{\mu+2, \mu+2}(N^*\mcz_{SS}, N^*\mcs_2), \text{ and }
\eeq
\begin{enumerate}
\item $(N^*\mcz_{SS}\backslash (N^*\mcs_1\cup N^*\mcs_2))\cap \Sigma_S = \emptyset$.
\item For $p\in \mcz_{SS}$, there are two linearly independent vectors $\zeta_+, \zeta_-\in N^*\mcz_{SS} \cap \Sigma_P$  if the following interaction condition holds: 
\beqq
\begin{gathered}
\text{for  $\zeta^i = (\tau^i, \xi^i) \in N_p^*\mcs_i, i = 1, 2$, we have  $\cos(\xi^1, \xi^2) < \frac{-\la}{\la + 2\mu}$}. \tag{I}
\end{gathered}
\eeqq
\end{enumerate}
\end{lemma}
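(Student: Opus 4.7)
The plan is to mirror the strategy used in Lemmas \ref{lmint1} and \ref{lmint2}. The paired-Lagrangian statement for $\nabla\cdot\mcg_{SS}$ follows by combining \cite[Lemma~4.1]{WZ} (moving distributions between $I^\mu(N^*\mcs_\bullet)$ and $I^{\mu+1}(N^*\mcs_\bullet)$ under differentiation) with \cite[Lemma~2.1]{GrU93} (product of conormal distributions meeting transversally): $\nabla u^{(\bullet),S}\in I^{\mu+1}(N^*\mcs_\bullet)$, their product lies in $I^{\mu+1,\mu+2}(N^*\mcz_{SS},N^*\mcs_\bullet)$, and one more derivative gives the stated class.

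For (1) and (2), fix $p\in\mcz_{SS}$, pick generators $\zeta^{(i)}=(\tau^i,\xi^i)\in N^*_p\mcs_i\subset\Sigma_S$, and normalize so $|\xi^i|=1$ and $\tau^i=\sqrt{\mu(p)}>0$. Any $\zeta\in N^*_p\mcz_{SS}$ not proportional to $\zeta^{(2)}$ can be written $\zeta=\zeta^{(1)}+b\zeta^{(2)}$ with $b\in\mbr$.

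For part (1), requiring $\zeta\in\Sigma_S$ gives $(\tau^1+b\tau^2)^2=\mu|\xi^1+b\xi^2|^2$. Using $(\tau^i)^2=\mu$, $|\xi^i|^2=1$, and $\tau^1\tau^2=\mu$, this collapses to $2b\mu(1-\xi^1\cdot\xi^2)=0$. Transversality forces $\xi^1\cdot\xi^2\neq 1$ and $\mu>0$ by hypothesis, so $b=0$ and $\zeta\in N^*\mcs_1$. For part (2), setting $c:=\xi^1\cdot\xi^2$ and requiring $\zeta\in\Sigma_P$ yields $\mu(1+b)^2=(\la+2\mu)(1+2bc+b^2)$, which rearranges to
\beq
(\la+\mu)b^2-2\bigl(\mu-(\la+2\mu)c\bigr)b+(\la+\mu)=0.
\eeq
Since $\la+\mu>0$, this is a genuine quadratic, with two distinct real roots iff
\beq
\bigl(\mu-(\la+2\mu)c\bigr)^2>(\la+\mu)^2.
\eeq
The branch $\mu-(\la+2\mu)c<-(\la+\mu)$ simplifies to $c>1$, which is excluded because $|c|\leq 1$ and $c<1$ by transversality. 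The remaining branch $\mu-(\la+2\mu)c>\la+\mu$ simplifies to $c<-\la/(\la+2\mu)$, which is precisely condition (I). Under (I), the two distinct roots $b_\pm$ produce covectors $\zeta_\pm=\zeta^{(1)}+b_\pm\zeta^{(2)}\in\Sigma_P\cap N^*_p\mcz_{SS}$, linearly independent because $b_+\neq b_-$; they automatically avoid $N^*\mcs_1\cup N^*\mcs_2\subset\Sigma_S$ since $\Sigma_P\cap\Sigma_S$ is trivial off the zero section.

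No serious obstacle is expected: the computation is a direct adaptation of Lemma \ref{lmint1}(2). The only subtle point is determining which branch of the discriminant inequality is compatible with the geometric constraint $|c|\leq 1$, and it is precisely this selection that produces the asymmetric form of the interaction condition (I) (which did not appear in the $PP$ case, where the analogous inequality $(\la+\mu)^2>0$ is automatic).
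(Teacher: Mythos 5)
Your proposal is correct and follows essentially the same route as the paper: parametrize $N_p^*\mcz_{SS}$ by $\zeta = \zeta^{(1)} + b\zeta^{(2)}$, impose the characteristic equation, and analyze the resulting quadratic in $b$. You are actually somewhat more careful than the paper's terse proof, which only displays the $\Sigma_P$ computation for part (2) (with a typo writing $\Sigma_S$ where $\Sigma_P$ is meant), does not spell out part (1), and does not rule out the spurious branch $\xi^1\cdot\xi^2 > 1$ of the discriminant inequality — details you handle explicitly.
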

\bpf
Let $(t, x) \in \mcz_{SS}$ and $\zeta^{(1)}\in N^*\mcs_1, \zeta^{(2)}\in N^*\mcs_2$. We write $\zeta^{(i)} = (\tau^{i}, \xi^{i}), \xi^{i}\in \mbr^3, i = 1, 2$ so that $|\xi^i| = 1.$ We have $
 (\tau^i)^2 = \mu |\xi^i|^2 = \mu, \ \ i = 1, 2. $
Now consider vectors $\zeta = \zeta^{(1)} + b\zeta^{(2)} \in N^*\mcs_{12},  b\in\mbr$. If $\zeta \in \Sigma_S$, we must have
\beqq\label{eqtemp1}
\begin{gathered}
 ( \tau^1 + b\tau^2)^2 = (\la + 2\mu) | \xi^1 + b\xi^2|^2 \\
\Rightarrow (\la + \mu)b^2 + 2( (\la + 2\mu)\xi^1\cdot \xi^2 - \mu) b + (\la + \mu) = 0.
\end{gathered}
\eeqq
The equation has two distinct real roots $b_\pm$ if  
\beq 
\xi^1\cdot \xi^2 < \frac{ -\la}{\la + 2\mu}. 
\eeq 
In this case, we get two P vectors in $N^*\mcz_{SS}$.
\epf
We remark that by our assumption $\la+\mu > 0, \mu >0$, we have $-\la/(\la+2\mu)\in (-1, 0)$. Thus one can find $\zeta^1, \zeta^2$ at $p\in \mcz_{SS}$ so that the interaction condition holds. \\
 
Next, let's recall the microlocal parametrix for  $Pu=f$ on $\mbr\times \mbr^3$. Let $\diag = \{ (z, z') \in  \mbr^4\times \mbr^4: z = z'\}$ be the diagonal of the product space  and $N^*\diag$ be the conormal bundle minus the zero section. We regard the symbols $\textbf{p}_{P/S}(z, \zeta) $ as functions on the product space. Then we denote by $\La^P, \La^S$ the flow out of $N^*\diag$ under $H_{\textbf{p}_P}, H_{\textbf{p}_S}$. So $\La^{P/S}$ are Lagrangian submanifolds of $T^*(\mbr^4\times \mbr^4)$. We know that the system $P$ is decomposed to the diagonal form. So according to \cite{MU}, there exits a distribution 
\beq
\begin{gathered}
Q_{sour} = Q_{sour}^P + Q_{sour}^S, \\
 Q_{sour}^P\in I^{-\frac 32, -\frac 12}(N^*\diag, \La^P),\ \ Q_{sour}^S\in I^{-\frac 32, -\frac 12}(N^*\diag, \La^S)
 \end{gathered}
\eeq
such that $PQ_{sour} = \id$ up to a smoothing term.  Here, the subscript $sour$ stands for the source problem. In the following, we denote 
\beq
\begin{gathered}
\La^{PPS} = \La^S\circ N^*\mcz_{PP}, \ \ \La^{SSP} = \La^P\circ N^*\mcz_{SS},\\
\La^{PSP} = \La^P\circ N^*\mcz_{PS}, \ \ \La^{PSS} = \La^{S}\circ N^*\mcz_{PS},\\
\La^{SPP} = \La^P\circ N^*\mcz_{SP}, \ \ \La^{SPS} = \La^{S}\circ N^*\mcz_{SP}.
\end{gathered}
\eeq
Again, because of the no conjugate point assumption, these are conormal bundles. In fact, $\La^{\bullet} = N^*\mcz_{\bullet}, \bullet = PPS, SSP, PSP, PSS, SPP, SPS$ where $\mcz_{\bullet}$ are codimension one submanifolds of $\mbr\times \overline\Omega.$ 
 
\begin{theorem}\label{thmresp}
Suppose $\uone, \utwo$ are distorted plane waves in Def.\ \ref{def-distor}.  
\begin{enumerate}
\item The solution to \eqref{eqlin2} can be decomposed as 
\beq
u^{(12)} = u^{PPS} + u^{PSP} + u^{PSS} + u^{SPP} + u^{SPS} + u^{SSP},
\eeq
such that microlocally away from $\La_1^P\cup\La_2^P\cup\La_1^S\cup\La_2^S$, we have
\beq
\begin{gathered}
u^\bullet \in I^{2\mu + \frac 52}(\La^\bullet), \quad \bullet = PPS, PSP, PSS, SPP, SPS, SSP.
\end{gathered}
\eeq
Moreover, $u^{SSP}$ is smooth on $\La^{SSP}$ unless $\mcs_1, \mcs_2$ satisfy the interaction condition. 

\item If $\mcz_{\bullet}$ intersect $\mbr\times \p\Omega$ transversally at $\mcy_{\bullet}$, then 
\beq
\p_{\eps_1}\p_{\eps_2}\La(f_\eps)\in I^{2\mu+\frac{9}{4}}(N_\p^*\mcy_{\bullet})
\eeq
are conormal distributions.
\item Consider the symbol at $\mcy_\bullet = \mcy_{PPS}. $ Let $(z_0, \zeta_0)\in T^*Z$ and the bicharacteristic from $(z_0, \zeta_0)$ intersect $T^*Y$ transversally. Let $(z, \zeta) =  \La^P(z_0, \zeta_0)$ and $(z_|, \zeta_|) =  R_0(z, \zeta)$ with $R_0$ the canonical relation of the restriction operator. Then the principal symbol satisfies
\beq
\sigma(\p_{\eps_1}\p_{\eps_2}\La(f_\eps))(z_|, \zeta_|) = \sigma(\rho_0)(z_|, \zeta_|, z, \zeta)Q^S_{sour}(z, \zeta, z_0, \zeta_0)\sigma(\nabla\cdot\mcg^{PP})(z_0, \zeta_0),
\eeq
where $Q^S_{sour}$ and $\sigma(\rho_0)$ are $3\times 3$ invertible matrices. Similar statements hold for $\mcy_{\bullet}, \bullet = PSP, PSS, SPP, SPS, SSP.$
\end{enumerate}
\end{theorem}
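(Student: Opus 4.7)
The plan is to apply the microlocal source parametrix $Q_{sour} = Q_{sour}^P + Q_{sour}^S$ to the right-hand side of \eqref{eqlin3} and then restrict to the boundary. For part (1), I decompose $\mcg(\uone,\utwo)$ into the four polarization pieces $\mcg^{PP}, \mcg^{PS}, \mcg^{SP}, \mcg^{SS}$. By Lemmas \ref{lmint1}--\ref{lmint3}, each $\nabla\cdot\mcg^\bullet$ is a paired Lagrangian distribution of class $I^{\mu+2,\mu+2}$ associated with $N^*\mcz_\bullet$ and the relevant $N^*\mcp_i$ or $N^*\mcs_i$. Applying $Q_{sour}^{P/S}$ and invoking the composition calculus for paired Lagrangian distributions of \cite{MU,GrU93}, I obtain on the flowout $\La^{P/S}\circ N^*\mcz_\bullet$ — which by the no-conjugate-point hypothesis coincides with the conormal bundle $\La^{\bullet'} = N^*\mcz_{\bullet'}$ of a codimension-one submanifold of $Z$ — a Lagrangian distribution of order $2\mu + 5/2$ microlocally away from the initial Lagrangians $\La_i^{P/S}$. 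Which labels $\bullet'$ actually carry a nontrivial leading symbol is dictated by the characteristic analysis in Lemmas \ref{lmint1}--\ref{lmint3}: the PP-interaction yields only PPS, the PS- and SP-interactions each yield two new labels, and the SS-interaction yields SSP if and only if the interaction condition (I) holds.

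For part (2), by the remark after Prop.\ \ref{propbdata} the wave front of $\nu\cdot(\nabla\cdot\mcg(\uone,\utwo))|_Y$ is confined to that of $\uone,\utwo$, so the new singularities of $\p_{\eps_1}\p_{\eps_2}\La(f_\eps)$ at $\mcy_\bullet$ come only from $\nu\cdot\tilde S(u^{(12)})|_Y$. Since $\mcz_\bullet$ meets $Y$ transversally at $\mcy_\bullet$, the restriction operator $\rho_0\in I^{1/4}(Z,Y;R_0)$ together with the first-order differential operator $\nu\cdot\tilde S$ can be composed cleanly with the conormal distribution $u^\bullet\in I^{2\mu + 5/2}(N^*\mcz_\bullet)$. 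Standard pullback/order bookkeeping consistent with the dimensional reduction from $Z$ to $Y$ then yields the conormal class $I^{2\mu + 9/4}(N^*_\p\mcy_\bullet)$.

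Part (3) is the principal symbol computation. At the interaction point $(z_0,\zeta_0)\in N^*\mcz_{PP}$, the symbol of $\nabla\cdot\mcg^{PP}$ is read off from the symbols of $\uone, \utwo$ and the explicit quadratic form of $\mcg$ in \eqref{eqG}. The source parametrix $Q_{sour}^S$ on the diagonalized system \eqref{eqlin3} transports this symbol along the S-bicharacteristic from $(z_0,\zeta_0)$ to $(z,\zeta)$, contributing the invertible $3\times 3$ matrix $Q^S_{sour}(z,\zeta,z_0,\zeta_0)$ inherited from the Melrose-Uhlmann construction; the final restriction to $Y$ multiplies by the invertible symbol $\sigma(\rho_0)(z_|,\zeta_|,z,\zeta)$, giving the stated formula. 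The five other cases $\mcy_{PSP}, \mcy_{PSS}, \mcy_{SPP}, \mcy_{SPS}, \mcy_{SSP}$ are completely analogous, with the parametrix branch and characteristic set chosen according to Lemmas \ref{lmint2}--\ref{lmint3}.

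The main technical obstacle is the verification that the composition of the paired Lagrangian $Q_{sour}^{P/S}$ with the conormal factor $\nabla\cdot\mcg^\bullet$ is clean outside the initial Lagrangians and that the resulting Lagrangian is exactly the flowout $\La^{\bullet'}$. This requires the no-conjugate-point assumption (to keep the flowout a smooth conormal bundle), the transversal intersection hypothesis on $\mcz_\bullet$ (so that the selection rules of Lemmas \ref{lmint1}--\ref{lmint3} apply microlocally), and vanishing-of-interaction arguments (to exclude, for instance, a PPP output). These structural ingredients are precisely those built into Definition \ref{def-distor}, so the real work reduces to careful bookkeeping of symbol orders and to verifying invertibility of the $3\times 3$ matrices appearing in part (3).
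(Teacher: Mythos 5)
Your proposal is correct and takes essentially the same approach as the paper: decompose $\mcg$ by polarization, use the characteristic analysis of Lemmas \ref{lmint1}--\ref{lmint3} to identify $\nabla\cdot\mcg^\bullet$ as paired Lagrangian distributions, apply the parametrix $Q_{sour}^{P/S}$ with the Greenleaf--Uhlmann composition theorem to obtain order $2\mu+\frac52$ on the flowout, then restrict to the boundary and transport principal symbols via the invertible matrices from $Q^{P/S}_{sour}$ and $\rho_0$. The paper's proof (which only works out the $PPS$ case and declares the rest analogous) is no more detailed on the boundary bookkeeping in part (2) than yours, so there is no substantive gap relative to its own argument.
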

\bpf
We analyze $u^{PPS}$ and the others are similar. We know that away from $\La^P_1, \La^P_2$, $\nabla\cdot\mcg^{PP}\in I^{2\mu-2}(N^*\mcz_{PP})$. 
Because $N^*\mcz_{PP}$ intersect $\Sigma_S$ transversally, we can apply Proposition 2.2 of \cite{GrU93} to get
\beq
u^{PPS} = Q^S(\nabla\cdot \mcg^{PP}) \in I^{\mu+2 + \mu+2 -\frac 32, -\frac 12}(N^*\mcz_{PP}, \La^{PPS})
\eeq
modulo a distribution whose wave front set is contained in a neighborhood of $\La^P_1, \La^P_2$. Thus, away from $\mcz_{PP}$, $u^{PPS} \in  I^{2\mu + \frac 52}(\La^{PPS})$. 

Next, if $\La^{PPS}$ intersect the boundary $Y$ transversally, we see that $\p_{\eps_1}\p_{\eps_2}\La(f_\eps) = \rho_0 (u^{PPS})$ near the intersection. By the composition of FIOs, we know the term is a conormal distribution with order $-1/4$ less than that of $u^{PPS}$. 
\epf

We remark that because $\mcz_{\bullet}$ are of codimension one, the singularities of the nonlinear response $u^{\bullet}$ above are of the same type as a distorted plane wave, see Figure \ref{figinter}. Also, if $\p \Omega$ is strictly convex with respect to $g_{P/S},$ the intersection of $\mcz_{\bullet}$ and $\mbr\times \p\Omega$ is transversal. We also remark that $u^{PPS}$ can be regarded as consisting of two waves in view of Lemma \ref{lmint1}. The same is true for $u^{SSP}$ in view of Lemma \ref{lmint3}.

\begin{figure}[htbp]
\centering
\includegraphics[scale=0.7]{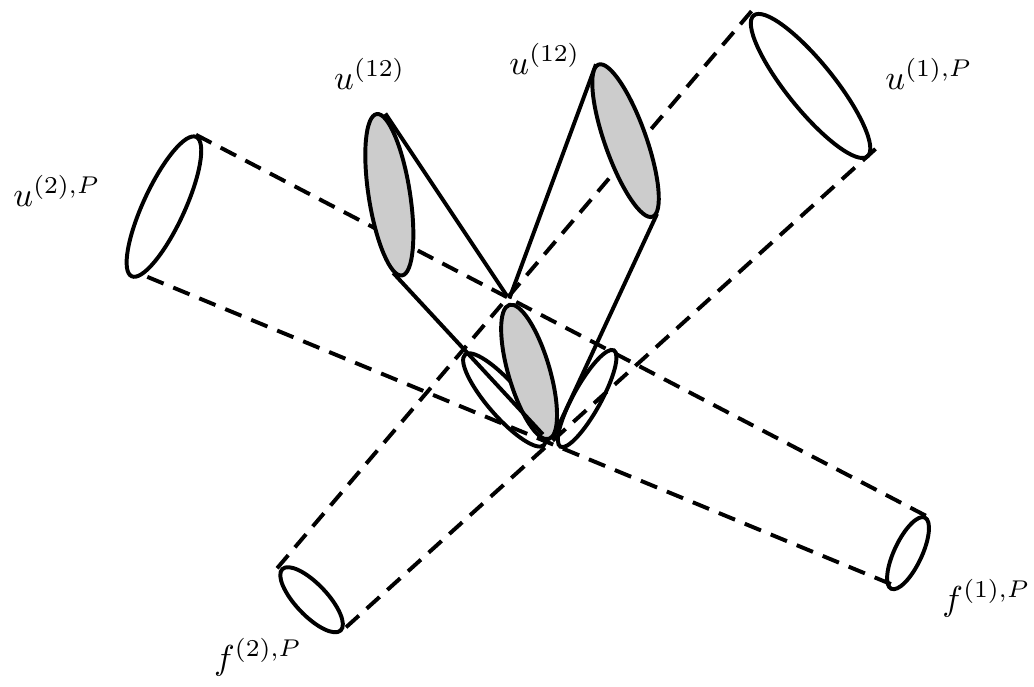}
\caption{Illustration of the interaction of two P waves. The picture is in $\mbr^3$. The white ellipses show the evolution of the singular supports of two P waves for different time $t> 0$ along the two paths. The gray ellipses show the generation and evolution of the wave fronts of the generated S wave.}
\label{figinter}
\end{figure}

\subsection{Symbols of the nonlinear responses}\label{sec-symbol}
We determine the symbol of the interaction terms  and show that they are not always vanishing. This would confirm the generation of new waves. Roughly, there are three kinds of interactions so we split the section to three subsections.

\subsubsection{P--P interactions} We take $u^{(\bullet), P}\in I^\mu(\La^P_\bullet),  \bullet = 1, 2, $ and consider the singularities of $\mcg^{PP}$.  For ease of calculation, we introduce some quantities for the interaction.  Let $z\in \mcp_1\cap\mcp_2$ and $(z, \zeta^{1}) \in \La_1^P, (z, \zeta^{2}) \in \La_2^P$. Assume that $\zeta = \zeta^{1} + \zeta^{2} \in \Sigma_S$. Let $\zeta^{\bullet} = (\tau^\bullet, \xi^\bullet), \xi^\bullet\in \mbr^3, \bullet = 1, 2.$ We call the plane determined by $\xi^1, \xi^2$ the {\em interaction plane.} Then $\xi = \xi^1 + \xi^2$. Because we consider the S wave, we let $\xi^H$ be a unit vector in the interaction plane perpendicular to $\xi$ and $\xi^V$ be a unit vector orthogonal to the interaction plane.  We define the angles $\alpha, \psi$ through
 \begin{gather*}
 \xi^1\cdot \xi^2 = |\xi^1||\xi^2|\cos \alpha,\ \
 \xi\cdot \xi^2 = |\xi||\xi^2|\cos\psi. 
 \end{gather*}
See Figure \ref{picplane}. The angles $\alpha, \psi$ and the relations have been used in the literatures (e.g.\ \cite{KSC}) and they are physically useful.

\begin{figure}[htbp]
\centering
\includegraphics[scale=0.7]{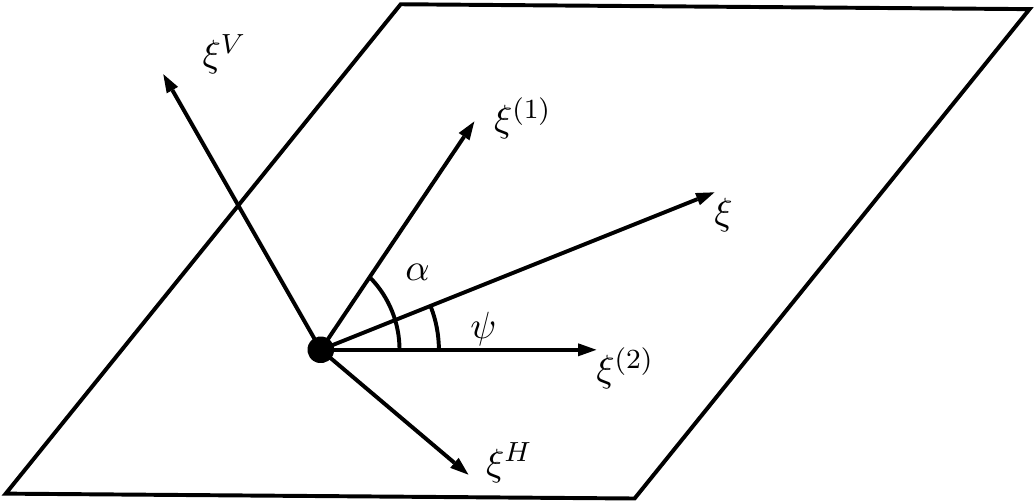}
\caption{The interaction plane for P--P wave interactions}
\label{picplane}
\end{figure}

We consider the term $\mcg(u^{(1), P}, u^{(2), P})$. The symbol of $u^{(\bullet), P}$ at $(z, \zeta^\bullet) \in \La_\bullet^P$ is the projection of $\sigma(u^{(\bullet)})(z, \zeta^{\bullet})$ by $\sigma(\Pi_P)$ at $z$ along the $\xi^\bullet, \bullet = 1, 2$ direction.  Thus we can write $\sigma(u^{(\bullet), P})(z, \zeta^\bullet) = a^\bullet \xi^\bullet$ for some constant $a^\bullet$.  Then consider 
  \beq
 \tilde e^{\bullet}_{mn} =  \ha (\frac{\p u^{(\bullet), P}_m}{\p x_n} + \frac{\p u^{(\bullet), P}_n}{\p x_m}) \in I^{\mu+1}(\La^P_\bullet), \ \ \bullet = 1, 2.
 \eeq
The corresponding symbol of  $\tilde e^\bullet$ is 
\beq
U^\bullet =  \imath a_\bullet \xi^{\bullet, T}\xi^\bullet, \ \ \bullet = 1, 2.
\eeq
Here, $\imath^2 = -1$ and $\xi^\bullet$ are regarded as row vectors hence $\xi^{\bullet, T}\xi^\bullet$ are  $3\times 3$ symmetric matrices. Let $\zeta = \zeta^1 + \zeta^2$. We denote the principal symbol of $\mcg(u^{(1), P}, u^{(2), P})\in I^{\mu+1}(N^*\mcz_{PP})$ at $(z, \zeta)\in N^*\mcz_{PP}\cap \Sigma_S$ by $\textbf{g}(z, \zeta)$.  We recall the symbol calculation from \cite[Lemma 3.3]{LUW1}. For $u^{(\bullet), P} \in I^\mu(N^*\mcp_\bullet)$, consider the principal symbol of $u^{(1), P} u^{(2), P} \in I^{2\mu + 3}(N^*\mcz_{PP}\backslash N^*\mcp_1\cup N^*\mcp_2)$ in local coordinates of $p \in \mcz_{PP}$. For $\zeta = \zeta_1 + \zeta_2 \in N^*\mcz_{PP}$ with $\zeta_\bullet \in N^*\mcp_\bullet$, we have 
\beq
\sigma(u^{(1), P}u^{(2), P})(z, \zeta) = \sigma(u^{(1), P})(z, \zeta_1)\sigma(u^{(2), P})(z, \zeta_2). 
\eeq
Here, we absorbed the $(2\pi)^{-1}$ factor in \cite[Lemma 3.3]{LUW1} to the symbols. Then we use \cite[Lemma 4.1]{WZ} and the expression \eqref{eqG} of $G(u, u)$ to get 
\beqq\label{eqG}
\begin{split}
 -\textbf{g}_{mn}  
 = &  \la  U^1_{jj}  U^2_{mn}+ \la  U^2_{jj}  U^1_{mn} +  \la U^1_{kj}U^2_{kj} \delta_{mn} \\
 & +  2 \mu U^1_{nj} U^2_{mj} + \mu U^1_{km}U^2_{kn} +2 \mu  U^2_{nj} U^1_{mj} + \mu U^2_{km}U^1_{kn}\\
&+ A [U^1_{mj}U^2_{nj} + U^2_{mj}U^1_{nj}] + B(2U^1_{jj}U^2_{mn} + 2U^2_{jj}U^1_{mn} + 2U^1_{ij}U^2_{ij}\delta_{mn}) + C 2U^1_{ii}U^2_{jj} \delta_{mn}\\
=& (\la + 2B) a_1 a_2 [|\xi^1|^2 \xi^{2}_m\xi^2_n + |\xi^2|^2 \xi^{1}_m\xi^1_n + (\xi^1\cdot \xi^2)^2 \delta_{mn}]\\
 &+(A +3 \mu) a_1 a_2 [\xi^{1}_m \xi^1_k \xi^{2}_k \xi^2_n +  \xi^{2}_m\xi^2_k \xi^{1}_k\xi^1_n  ]  
 + 2 C a_1a_2 |\xi^1|^2|\xi^2|^2 \delta_{mn}.
 \end{split}
\eeqq
(The negative sign is due to the symbol of two derivatives.) Then we get 
 \beq
\textbf{h}(z, \zeta) = \sigma(\nabla\cdot \mcg(u^{(1), P}, u^{(2), P}))(z, \zeta) = \imath \textbf{g}(z, \zeta) \xi.
 \eeq
 Because we consider the S wave propagation, we project the symbol $\textbf{h}$ along the $\xi^H$ and $\xi^V$ directions, which are denoted by $\textbf{h}^{SH}, \textbf{h}^{SV}$ respectively. Then the symbol $\textbf{h}^{S\bullet}$ are 
 \beqq\label{eqxi}
 |\xi^\bullet|^{-2} (\xi^\bullet \textbf{g}(z, \zeta) \xi )\xi^\bullet = (\xi_m^\bullet \textbf{g}_{mn}\xi_n)\xi^\bullet, \ \ \bullet = V, H.
 \eeqq
We first compute the symbol $\textbf{h}^{SV}$: 
\beq
\begin{gathered}
 \imath \textbf{h}^{SV}(z, \zeta) = (\la + 2B)a_1a_2 [|\xi^1|^2 (\xi^{V}\cdot \xi^{2})(\xi^2\cdot \xi) + |\xi^2|^2 (\xi^{V}\cdot \xi^{1})(\xi^1\cdot \xi)]\xi^V\\
+ (3\mu + A)a_1a_2[ (\xi^{V}\cdot \xi^{1})(\xi^1 \cdot \xi^{2})(\xi^2\cdot \xi) + (\xi^{V}\cdot \xi^{2})(\xi^2 \cdot \xi^{2})(\xi^1\cdot \xi)]\xi^V = 0,
\end{gathered}
\eeq
because of $\xi^V$ is perpendicular to the interaction plane. Next we consider the symbol $\textbf{h}^{SH}$:
\beq
\begin{split}
\imath \textbf{h}^{SH}(z, \zeta)  &= (\la+ 2B) a_1 a_2 [|\xi^1|^2 ((\xi^H \cdot \xi^{2})(\xi^2 \cdot \xi) + |\xi^2|^2 (\xi^H \cdot \xi^{1})(\xi^1\cdot \xi) + (\xi^1\cdot \xi^2)^2 (\xi^H\cdot \xi)]\xi^H\\
 & + (3\mu +A) a_1a_2 [(\xi^H \cdot \xi^{1})(\xi^1\cdot \xi^{2})(\xi^2\cdot \xi) + (\xi^H \cdot \xi^{2})(\xi^2 \cdot \xi^{1})(\xi^1\cdot \xi) ] \xi^H\\
 &+ 2 C a_1a_2 |\xi^1|^2|\xi^2|^2 (\xi^H\cdot \xi) \xi^H \\
 & = a_1 a_2|\xi^1|^2|\xi^2|^2 |\xi| [ (\la + 2B) (\sin\psi \cos\psi - \sin(\alpha-\psi) \cos (\alpha - \psi)  )\\
 &+ (A + 3\mu)( -\sin(\alpha-\psi)\cos \alpha \cos \psi + \sin\psi\cos\alpha\cos(\alpha-\psi))] \xi^H.
\end{split}
\eeq
Using trigonometry identities, we obtain that
 \beq
  \textbf{h}^{SH}(z, \zeta)    = -\imath a_1 a_2|\xi^1|^2|\xi^2|^2 |\xi|  (\la + 3\mu + A+ 2B)\cos\alpha \sin(2\psi-\alpha) \xi^H.
\eeq
If $\la + 3\mu + A + 2B \neq 0$, this is non-vanishing for $(\psi, \alpha)$ in any open set of $(0, \pi)^2$. In this sense, we call the symbol {\em generically non-vanishing}. We also observe that in the principal symbol of $\mcg^{PP}$ the information of $C(x)$ is lost.

\subsubsection{P--S interactions} Consider the term $\mcg^{PS} = \mcg(u^{(1), P}, u^{(2), S}).$ The analysis for $\mcg^{SP}$ is the same. For simplicity, we let $\uone = u^{(1), P}$ and $\utwo = u^{(2), S}$. For the principal symbol of $u^{(2), S}$ at $(z, \zeta) = (t, x; \tau, \xi)\in \La^S$, we observe that
\beq
\sigma(\tilde e^{(2)}_{ii})(z, \zeta) =  \sum_{i=1}^3  \xi_i \frac{|\xi^2|\delta_{il} - \xi_i\xi_l}{|\xi|^2}\imath \sigma(u^{(2), S}_l)  = 0.
\eeq
(Another way to see this is that the S component of $u$ is divergence free.) This type of term appears in $C  \tilde e_{ii}\tilde e_{jj} \delta_{mn}$ of $G(u, u)$ so $C(x)$ does not appear in the symbols for interactions involving S waves. Therefore, before we compute the symbols explicitly, we  proved
\begin{prop}\label{lmC}
For the two distorted plane waves $\uone, \utwo$ in Def.\ \ref{def-distor}, the principal symbols of the corresponding terms $\mcg^{\bullet}, \bullet = PP, PS, SP, SS$  are independent of $C(x)$. So are the symbols of the nonlinear responses $u^\bullet, \bullet = PPS, PSP, PSS, SPS, SPP, SSP.$
\end{prop}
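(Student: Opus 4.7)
The plan is to pinpoint the unique place in the cubic free energy where $C(x)$ appears, and then to show that that term either dies at the level of the strain-tensor trace of an S input, or dies upon projection onto the polarization of a newly generated S wave. Directly from \eqref{eqG}, the only $C$-dependent summand in $G(u,w)$ is $C\,\tilde e_{ii}\tilde f_{jj}\delta_{mn}$, so after symmetrization the $C$-contribution to $\mcg(u^{(i)}, u^{(j)})$ is of the form $2 C\,\sigma(\tilde e^{(i)}_{ii})\,\sigma(\tilde e^{(j)}_{jj})\,\delta_{mn}$, i.e.\ proportional to the product of the two strain-tensor traces and \emph{pure trace} in its tensor structure.

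First I would dispose of the cases $\bullet\in\{PS, SP, SS\}$: in each of these, at least one of $u^{(i)},u^{(j)}$ is an S wave, and the identity $\sigma(\tilde e^{(\bullet),S}_{ii})=0$ just established in the text (S modes are divergence-free to principal order) kills the $C$-contribution to $\sigma(\mcg^{\bullet})$ on the intersection manifold. Hence in these three cases $\sigma(\mcg^{\bullet})$ is manifestly $C$-independent.

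The more subtle case is $\bullet=PP$, for which both traces are nonzero. Here the observation is that the $C$-piece of $\sigma(\mcg^{PP})$ is $\kappa\,\delta_{mn}$, so its contribution to $\sigma(\nabla\!\cdot\!\mcg^{PP})$ is $\imath\kappa\,\xi$, a vector \emph{parallel to} $\xi$. By Lemma \ref{lmint1}, the only new covectors in $N^*\mcz_{PP}\setminus(N^*\mcp_1\cup N^*\mcp_2)$ lie in $\Sigma_S$, so the generated wave is an S wave with polarization directions $\xi^H,\xi^V$ perpendicular to $\xi$. Projecting $\imath\kappa\,\xi$ along $\xi^H$ or $\xi^V$ therefore yields $0$. (This matches the explicit computation already done for $\textbf{h}^{SH}$ and $\textbf{h}^{SV}$, where the $C\delta_{mn}$ contribution collapses to a factor $\xi\cdot\xi^{H/V}=0$.)

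For the nonlinear responses $u^{\bullet}$, I would invoke part (3) of Theorem \ref{thmresp}, which expresses the principal symbol of each $u^{\bullet}$ along its associated Lagrangian $\La^{\bullet}$ as $Q^{P/S}_{sour}(z,\zeta,z_0,\zeta_0)\,\sigma(\nabla\!\cdot\!\mcg^{\bullet\bullet})(z_0,\zeta_0)$ with $(z_0,\zeta_0)\in N^*\mcz_{\bullet\bullet}\cap\Sigma_{P/S}$ prescribing the mode of the outgoing wave. Since $Q^{P/S}_{sour}$ depends only on $\la,\mu$, and the relevant $\Sigma_{P/S}$-component of $\sigma(\nabla\!\cdot\!\mcg^{\bullet\bullet})$ is $C$-free by the preceding two steps, the symbols of all six $u^{\bullet}$ are $C$-independent. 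I do not foresee a real obstacle here; the only point that requires some care is the $PP$ case, where the vanishing uses the \emph{output} orthogonality $\xi^{H,V}\perp\xi$ (guaranteed by S propagation of the generated wave) rather than the vanishing of an input trace.
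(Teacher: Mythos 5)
Your proof is correct, and for the $PS$, $SP$, $SS$ cases it coincides with the paper's: the S mode is trace-free at the symbol level, $\sigma(\tilde e^{(\bullet),S}_{ii})=0$, which kills the $C\,\tilde e_{ii}\tilde f_{jj}\delta_{mn}$ contribution to $\sigma(\mcg^{\bullet})$ outright. Where you depart from --- and improve on --- the paper is the $PP$ case. The paragraph preceding the proposition in the paper only argues for interactions involving at least one S wave; for $PP$ it relies silently on the explicit trigonometric computations of $\textbf{h}^{SH}$ and $\textbf{h}^{SV}$ carried out earlier, from which one merely observes a posteriori that $C$ has cancelled. You instead give a structural reason: the $C$-piece of $\sigma(\mcg^{PP})$ is pure trace, so after applying $\nabla\cdot$ its symbol contribution is a multiple of $\xi$, and by Lemma \ref{lmint1} the only new covectors in $N^*\mcz_{PP}$ lie in $\Sigma_S$, whose polarization directions $\xi^H,\xi^V$ are orthogonal to $\xi$; hence the $C$-term dies upon projection. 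Incidentally, your argument exposes a small imprecision in the proposition as worded: $\sigma(\mcg^{PP})$ \emph{does} carry a term proportional to $C\delta_{mn}$, as the paper's own $\textbf{g}_{mn}$ formula shows; it is only after taking the divergence and projecting onto the S polarization --- i.e., at the level of $\sigma(u^{PPS})$ --- that $C$-independence holds, which is exactly what your argument together with Theorem \ref{thmresp}(3) establishes.
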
 

\begin{figure}[htbp]
\centering
\includegraphics[scale=0.7]{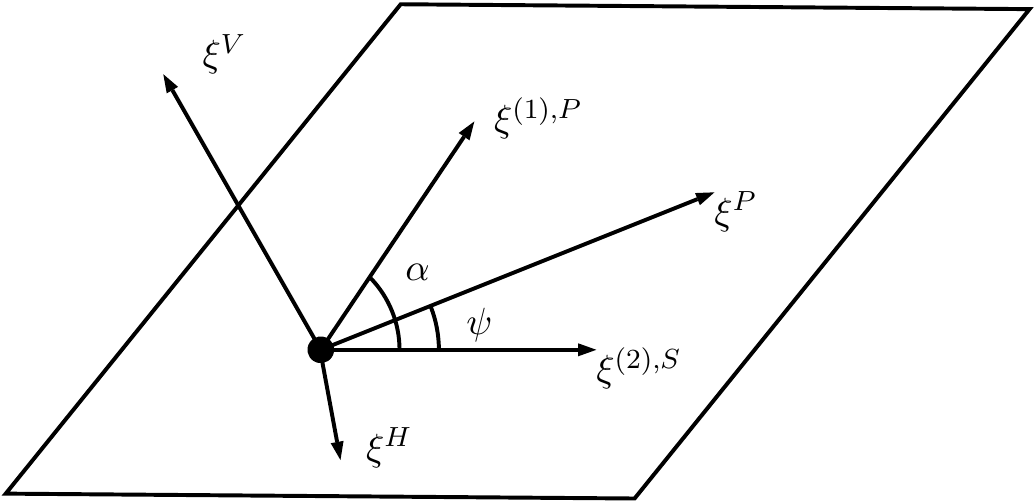}
\includegraphics[scale=0.7]{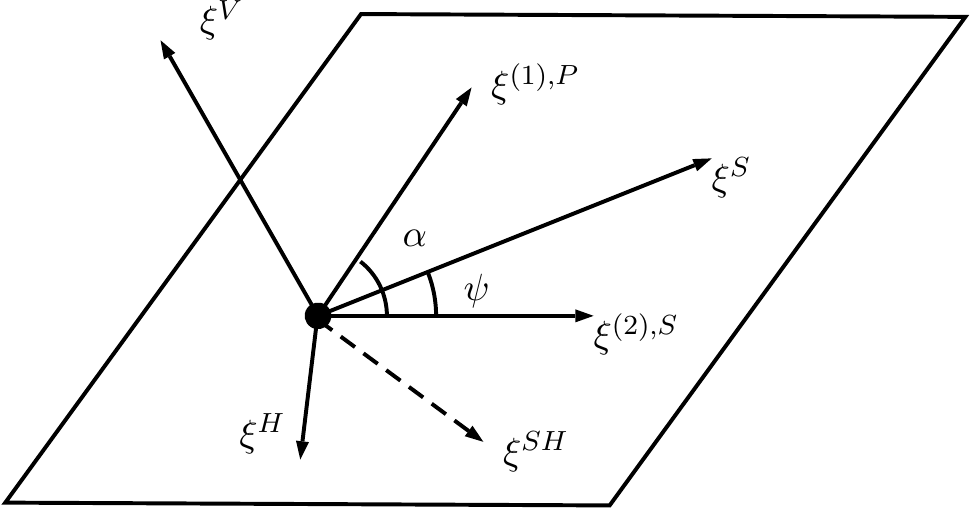}
\caption{The interaction plane for P--S wave interactions. Left: picture for the P mode. Right: picture for the S mode.}
\label{picplaneps}
\end{figure}

Now we proceed to determine the principal symbol of $\mcg^{PS}$. We again introduce the interaction plane to simplify the calculation, see Figure \ref{picplaneps}.  Let $z\in \mcp_1\cap\mcs_2$ and $(z, \zeta^{1}) \in \La_1^P, (z, \zeta^{2}) \in \La_2^S$. Let $\zeta^i = (\tau^i, \xi^i), i = 1, 2$ as before. We call the plane determined by $\xi^1, \xi^2$ the interaction plane. Let $\xi^H$ be a unit vector in the interaction plane orthogonal to $\xi^2$. Then let $\xi^V$ be a unit vector orthogonal to the interaction plane.  

 We first consider the P mode of $\mcg^{PS}$. Assume that $\zeta^P = \zeta^{1} + \zeta^{2} \in \Sigma_P$ and let $\zeta = (\tau^P, \xi^P)$. We define the angles $\alpha, \psi$ through
 \begin{gather*}
\xi^1\cdot \xi^2 = |\xi^1||\xi^2|\cos \alpha,\ \ \xi\cdot \xi^P = |\xi||\xi^P|\cos\psi,
 \end{gather*}
 see the left figure of Figure \ref{picplaneps}. 
Now we can express the principal symbols of $\uone, \utwo$ in terms of these quantities. We let $\sigma(u^{(1), P})(z, \zeta^1) = a \xi^{1}$ for some constant $a$ and we decompose $\sigma(u^{(2), S})(z, \zeta^2) = b_H \xi^{H} + b_V \xi^{V}$ for some constants $b_\bullet$. We let $U^1 = a \xi^{1, T}\xi^1$ so that the principal symbol of $\tilde e^1$ is $\imath U^1.$ Next we let 
\beq
W_{mn} =   \xi^2_n (b_H \xi^H_m + b_V \xi^V_m) = \xi^2_n b_H \xi^H_m +  \xi^2_n  b_V \xi^V_m = W^H_{mn} + W^V_{mn}
\eeq
corresponding to the H, V decomposition. 
We observe that the principal symbol $\sigma(\dfrac{\p u^{(2), S}_m}{\p x_n}) $ is $\imath W_{mn}.$ Now we define
\beq
\begin{gathered}
U^2 = \ha (W + W^T) = \ha\xi^{2, T}(b_H \xi^{H} + b_V \xi^{V}) + \ha(b_H \xi^{H, T} + b_V \xi^{V, T})\xi^2\\
 = \ha b_H (\xi^{2, T} \xi^{H} + \xi^{H, T}\xi^2) + \ha b_V (\xi^{2, T} \xi^{V} + \xi^{V, T}\xi^2) = U^H + U^V,
\end{gathered}
\eeq
where $U^H, U^V$ are defined by the second line. So the principal symbol of $\tilde e^{(2)}$ is $\imath U^2.$ We remark that the $U$ matrices are symmetric but $W$ matrices are not. Because of the H, V decomposition, we will write $\sigma(\mcg^{PS})(z, \zeta) = {\bf g}^H + {\bf g}^V$ 
where ${\bf g}^\bullet, \bullet = H, V$ are defined as
\beqq\label{eqgps}
\begin{split}
-\textbf{g}^\bullet_{mn} &=   \la  U^1_{jj}  W^\bullet_{mn} +  \la U^1_{kj}W^\bullet_{kj} \delta_{mn}  +  2 \mu U^1_{nj} W^\bullet_{mj} + \mu U^1_{km}W^\bullet_{kn} +2 \mu U^\bullet_{nj} U^1_{mj} + \mu W^\bullet_{km}U^1_{kn}\\
&+ A [U^1_{mj}U^\bullet_{nj} + U^\bullet_{mj}U^1_{nj}] + B(2U^1_{jj}U^\bullet_{mn}  + 2U^1_{ij}U^\bullet_{ij}\delta_{mn})\\
 &= \la a b_\bullet (|\xi_1|^2 \xi^\bullet_m \xi^2_n + \xi^1_k \xi^1_j \xi^\bullet_k \xi^2_j\delta_{mn})\\
 & + 2\mu ab_\bullet \xi^1_n\xi^1_j\xi^\bullet_m\xi^2_j + \mu ab_\bullet \xi_k^1\xi^1_m\xi^\bullet_k \xi^2_n + \mu ab_\bullet (\xi^\bullet_n \xi^2_j + \xi^\bullet_j \xi^2_n) \xi^1_m\xi^1_j + \mu ab_\bullet \xi^\bullet_k \xi^2_m\xi^1_k\xi^1_n\\
 &+ A ab_\bullet \ha [\xi^1_m\xi^1_j (\xi^\bullet_n \xi^2_j + \xi^\bullet_j \xi^2_n) + (\xi^\bullet_m \xi^2_j + \xi^\bullet_j \xi^2_m)\xi^1_n\xi^1_j]\\
 & + B ab_\bullet (|\xi_1|^2 (\xi^\bullet_m \xi^2_n + \xi^\bullet_n \xi^2_m) + \xi^1_i\xi^1_j(\xi^\bullet_i \xi^2_j + \xi^\bullet_j \xi^2_i)\delta_{mn}).
\end{split}
\eeqq
 Then we get 
 \beq
\textbf{h}(z, \zeta) = \sigma(\nabla\cdot \mcg(u^{1, P}, u^{2, S}))(z, \zeta) =  \imath (\textbf{g}^H(z, \zeta) + \textbf{g}^V(z, \zeta)) \xi^P. 
 \eeq
 Finally, we project the symbol to $\xi^P$ direction to get the symbol of the P mode:
  \beq 
{\bf h}^{\bullet P} = |\xi^P|^{-2} (\xi^P \textbf{g}^\bullet(z, \zeta) \xi^P )\xi^P, \ \ \bullet =  H, V.
 \eeq 
 We compute 
\beq
\begin{split}
 \imath \textbf{h}^{HP}(z, \zeta)  & = |\xi^P|^{-2} a b_H  (\la + 2B)  [|\xi^1|^2(\xi^P \cdot \xi^2)(\xi^P\cdot \xi^H)  + (\xi^1\cdot \xi^2)(\xi^1\cdot \xi^H)|\xi^P|^2] \xi^P \\
 & +|\xi^P|^{-2} a b_H (3\mu +A)   [ (\xi^P\cdot \xi^2) (\xi^1\cdot \xi^P)(\xi^1\cdot \xi^H) + (\xi^P \cdot \xi^H) (\xi^1\cdot \xi^P)(\xi^1\cdot \xi^2) ]\xi^P \\
 & = a b_H |\xi^1|^2|\xi^2|  |\xi^H| [ (\la + 2B) (-\cos \psi\sin \psi  - \cos\alpha \sin\alpha) \\
 &+  (A + 3\mu)( -\cos \psi \cos (\alpha - \psi) \sin \alpha - \sin\psi \cos(\alpha - \psi)\cos\alpha )]\xi^P.
\end{split}
\eeq
We observe that 
\beq
\begin{gathered}
-\cos \psi \cos (\alpha - \psi) \sin \alpha - \sin\psi \cos(\alpha - \psi)\cos\alpha = -\cos(\alpha - \psi)\sin(\alpha + \psi)\\
 = -\cos \psi\sin \psi  - \cos\alpha \sin\alpha.
 \end{gathered}
\eeq
Thus we have 
\beq
 \textbf{h}^{HP}(z, \zeta)   =   \imath a b_H |\xi^1|^2|\xi^2|   (\la + 2B + A + 3\mu) \cos(\alpha - \psi)\sin(\alpha + \psi) \xi^P.
\eeq
This term is generically non-vanishing when $\la + 2B + A +3\mu \neq 0.$

Next, consider the interactions with the $V$ components of $u^{(2), S}.$
\beq
\begin{split}
 \imath \textbf{h}^{VP}(z, \zeta)  &= |\xi^P|^{-2} a b_V (\la+ 2B)  [|\xi^1|^2(\xi^P \cdot \xi^2)(\xi^P\cdot \xi^V)  + (\xi^1\cdot \xi^2)(\xi^1\cdot \xi^V)|\xi^P|^2] \xi^P \\
 & +|\xi^P|^{-2} a b_V (3\mu +A)   [ (\xi^P\cdot \xi^2) (\xi^1\cdot \xi^P)(\xi^1\cdot \xi^V) + (\xi^P \cdot \xi^V) (\xi^1\cdot \xi^P)(\xi^1\cdot \xi^2) ]\xi^P =0.
\end{split}
\eeq
Thus we conclude that the symbol of $u^{PSP}$ at $(z, \zeta)$ is given by $\textbf{h}^{HP}$  and the term is generically non-vanishing. \\

It remains to consider the generation of S mode from the P--S interaction. In this case, we let $\xi = \xi^S$ and $\xi^{SH}$ be the unit vector in the interaction plane orthogonal to $\xi^S$. We decompose $u$ to the plane determined by $\xi^{SH}$ and $\xi^V$, see the right picture of Figure \ref{picplaneps}. The computation of ${\bf g}$ is the same as \eqref{eqgps} and we have the symbol of $\mcg^{PS}$
 \beq
\textbf{h}(z, \zeta) = \sigma(\nabla\cdot \mcg(u^{(1), P}, u^{(2), S}))(z, \zeta) =  \imath (\textbf{g}^H(z, \zeta) + \textbf{g}^V(z, \zeta)) \xi^S. 
 \eeq
We project the symbol to $\xi^\ast, \ast = V, SH$ directions to get the symbol of the S mode:
  \beq 
{\bf h}^{\bullet \ast} = |\xi^\ast|^{-2} \imath (\xi^\ast \textbf{g}^\bullet (z, \zeta) \xi^S )\xi^\ast, \ \ \bullet = V, H, \quad \ast = V, SH.
 \eeq  
 We compute
 \beq
\begin{split}
 \imath \textbf{h}^{\bullet \ast}(z, \zeta)    & =  a b_\bullet  \la  [|\xi^1|^2   (\xi^\ast \cdot \xi^\bullet)(\xi^2 \cdot \xi^S) + (\xi^1 \cdot \xi^\bullet)(\xi^1 \cdot \xi^2) (\xi^\ast \cdot \xi^S)] \xi^\ast\\
 & + 2\mu ab_\bullet (\xi^\ast\cdot \xi^\bullet)(\xi^1\cdot \xi^2)(\xi^1\cdot \xi^S) \xi^\ast + \mu ab_\bullet (\xi^1\cdot \xi^\ast)(\xi^1\cdot \xi^\bullet)(\xi^2\cdot \xi^S) \xi^\ast\\
 & + \mu a b_\bullet [(\xi^1\cdot \xi^\ast)(\xi^1\cdot \xi^2)(\xi^\bullet \cdot \xi^S) + (\xi^1\cdot \xi^\ast)(\xi^1\cdot\xi^\bullet)(\xi^2\cdot \xi^S) + (\xi^2\cdot \xi^\ast)(\xi^1\cdot \xi^\bullet)(\xi^1\cdot \xi^S)]\xi^\ast \\
 & + Aa b_\bullet \ha [ (\xi^1\cdot \xi^\ast)(\xi^1\cdot \xi^2)(\xi^\bullet \cdot \xi^S) + (\xi^1\cdot \xi^\ast)(\xi^1\cdot \xi^\bullet)(\xi^2 \cdot \xi^S) + (\xi^\bullet\cdot \xi^\ast)(\xi^1\cdot \xi^2)(\xi^1 \cdot \xi^S)\\
 & + (\xi^2\cdot \xi^\ast)(\xi^\bullet\cdot \xi^1)(\xi^1 \cdot \xi^S)  ]\xi^\ast\\
 &+ B ab_\bullet  [|\xi^1|^2   (\xi^\ast \cdot \xi^\bullet)(\xi^2 \cdot \xi^S) + |\xi^1|^2   (\xi^\bullet \cdot \xi^S)(\xi^2 \cdot \xi^\ast)  + 2 (\xi^1\cdot \xi^\bullet)(\xi^1\cdot \xi^2) (\xi^\ast \cdot \xi^S)] \xi^\ast.
\end{split}
\eeq 
We observe that if $\bullet = V, \ast = SH$ or $\bullet = H, \ast = V$ then the term must be zero. So it suffices to consider two cases. 
After some calculations, we find that 
 \beq
\begin{split}
 & \imath \textbf{h}^{H SH}(z, \zeta) \\
  & =  ab_{H} |\xi^1|^2 |\xi^2| |\xi^S| [\la   \cos^2 \psi + \mu (\cos(2\psi) + \cos^2\psi) 
 + \ha A \cos(2\psi) + B(\cos^2\psi - \sin^2\psi)]\xi^{SH} \\
 & =  ab_{H} |\xi^1|^2 |\xi^2| |\xi^S| [(\la +  2\mu + B + \ha A)\cos^2\psi - (\mu + B + \ha A)\sin^2\psi ]\xi^{SH}.
\end{split}
\eeq 
The following lemma is straightforward. 
\begin{lemma}
For $ \psi \in (0, \pi)$,  consider the vector $\vec v(\psi) = [\cos^2\psi, \sin^2 \psi]$. Then $\det(\vec v(\psi_1), \vec v(\psi_2) )$ 
is non-vanishing for $ \psi_1, \psi_2 $ in any open subset of $(0, \pi)^2.$
\end{lemma}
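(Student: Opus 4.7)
The plan is to write the determinant explicitly and factor it into a product of two sine functions, after which non-vanishing on any open set follows because the zero locus is a finite union of smooth curves.

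First I would expand
\[
\det(\vec v(\psi_1), \vec v(\psi_2)) = \cos^2\psi_1 \sin^2\psi_2 - \cos^2\psi_2 \sin^2\psi_1.
\]
Recognizing this as a difference of squares, I would factor it as
\[
(\cos\psi_1 \sin\psi_2 - \cos\psi_2 \sin\psi_1)(\cos\psi_1 \sin\psi_2 + \cos\psi_2 \sin\psi_1),
\]
and then apply the standard sine subtraction/addition identities to rewrite this as
\[
\det(\vec v(\psi_1), \vec v(\psi_2)) = \sin(\psi_2 - \psi_1)\, \sin(\psi_2 + \psi_1).
\]

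From this closed-form expression, the zero set inside $(0,\pi)^2$ is contained in $\{\psi_1 = \psi_2\} \cup \{\psi_1 + \psi_2 = \pi\}$, since $\psi_1 \pm \psi_2 \in (-\pi, 2\pi)$ and the only zeros of $\sin$ in that range are $0$ and $\pi$. This is a union of two smooth one-dimensional curves (two line segments), hence a set with empty interior in $(0,\pi)^2$. Therefore any nonempty open subset $U \subset (0,\pi)^2$ contains a point $(\psi_1, \psi_2)$ at which $\sin(\psi_2-\psi_1)\sin(\psi_2+\psi_1) \neq 0$, which is the desired conclusion.

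There is no genuine obstacle here; the only step requiring a bit of care is the factorization, but the difference-of-squares trick makes it immediate. The content of the lemma, once factored, is simply that the real-analytic function $\sin(\psi_2-\psi_1)\sin(\psi_2+\psi_1)$ is not identically zero, which is evident from the explicit form of its zero set.
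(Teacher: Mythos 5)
Your proof is correct, and since the paper simply declares the lemma ``straightforward'' without giving an argument, your factorization $\det(\vec v(\psi_1),\vec v(\psi_2)) = \sin(\psi_2-\psi_1)\sin(\psi_2+\psi_1)$ is precisely the kind of elementary verification the authors had in mind. The identification of the zero set as the two lines $\psi_1=\psi_2$ and $\psi_1+\psi_2=\pi$, which has empty interior, cleanly establishes the intended reading of ``non-vanishing on any open subset.''
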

In this sense, we say that the symbol $\textbf{h}^{HSH}$ is generically nonvanishing if $\la +  2\mu + \ha A+ B\neq 0$ or $\mu + \ha A+ B\neq 0$.

Next, we calculate that 
 \beq
\begin{split}
 \imath \textbf{h}^{VV}(z, \zeta)  & =   a b_V    |\xi^1|^2 |\xi^2||\xi^S| [\la \cos\psi + 2\mu \cos\alpha \cos(\alpha - \psi)  + \ha A  \cos\alpha \cos(\alpha - \psi) + B\cos \psi ] \xi^V\\
 & = a b_V    |\xi^1|^2 |\xi^2||\xi^S| [(\la + B) \cos\psi + (2\mu + \ha A)\cos\alpha \cos(\alpha - \psi) ] \xi^V.
\end{split}
\eeq 
Similarly, we conclude that the term is generically non-vanishing if $\la + B\neq 0$ or $2\mu + \ha A\neq 0.$

\subsubsection{S--S interactions}
We let $u^{(1)} = u^{(1), S}$ and $u^{(2)} = u^{(2), S}$ and we consider $\mcg^{SS}$. We decompose the S modes according to the interaction plane, see Figure \ref{picplaness}.  Let $z\in \mcs_1\cap\mcs_2$ and $(z, \zeta^{1}) \in \La_1^S, (z, \zeta^{2}) \in \La_2^S$. Let $\zeta^i = (\tau^i, \xi^i), i = 1, 2$ as before. We call the plane determined by $\xi^1, \xi^2$ the interaction plane. Let $\xi^{i, H}$ be a unit vector in the interaction plane orthogonal to $\xi^i, i = 1, 2.$ Then let $\xi^V$ be a unit vector orthogonal to the interaction plane. We can decompose $u^{(1)}, u^{(2)}$ to H, V modes. 

We decompose $\sigma(u^{(i), S})(z, \zeta^2) = b^i_H \xi^{i, H} + b^i_V \xi^{i, V}$ for some $b^i_\bullet$ constants, $i = 1, 2, \bullet = H, V$.  Similar to the previous case, we let 
\beq
W^i_{mn} =  \xi^i_n (b^i_H \xi^{i, H}_m + b^i_V \xi^{i, V}_m) = \xi^i_n b^i_H \xi^{i, H}_m +   \xi^i_n  b^i_V \xi^{i, V}_m = W^{i,H}_{mn} + W^{i, V}_{mn}, \ \ i = 1, 2,
\eeq
corresponding to the H, V decomposition. The principal symbol $\sigma(\dfrac{\p u^{(i), S}_m}{\p x_n}) $ is $\imath W^i_{mn}.$ Now we define
\beq
\begin{gathered}
U^i = \ha (W^i + W^{i, T}) = \ha\xi^{i, T}(b^i_H \xi^{i, H} + b^i_V \xi^{i, V}) + \ha(b^i_H \xi^{i, H, T} + b^i_V \xi^{i, V, T})\xi^i\\
 = \ha b^i_H (\xi^{i, T} \xi^{i, H} + \xi^{i, H, T}\xi^i) + \ha b^i_V (\xi^{i, T} \xi^{i, V} + \xi^{i, V, T}\xi^i) = U^{i, H} + U^{i, V},
\end{gathered}
\eeq
So the principal symbol of $\tilde e^{(i)}$ is $\imath U^i.$  Because of the H, V decomposition, we will write 
\[ 
\sigma(\mcg^{PS})(z, \zeta) = {\bf g}^{HH} + {\bf g}^{HV} + {\bf g}^{VH} + {\bf g}^{VV},
\]
where ${\bf g}^{\ast \bullet}, \ast, \bullet = \text{H, V}$ are defined as
\beqq\label{eqgps}
\begin{split}
 -\textbf{g}^{\ast \bullet}_{mn}  &=   \la W^{1, \ast}_{kj}W^{2, \bullet}_{kj} \delta_{mn}  +  2 \mu U^{1, \ast}_{nj} W^{2, \bullet}_{mj} + \mu W^{1, \ast}_{km}W^{2, \bullet}_{kn} +2 \mu U^{2, \bullet}_{nj} W^{1, \ast}_{mj} + \mu W^{2, \bullet}_{km}W^{1, \ast}_{kn}\\
&+ A [U^{1, \ast}_{mj}U^{2, \bullet}_{nj} + U^{2, \bullet}_{mj}U^{1, \ast}_{nj}] + 2B U^{1, \ast}_{ij}U^{2, \bullet}_{ij}\delta_{mn},\\
& = \la \xi^{1, \ast}_k \xi^{1}_j \xi^{2, \bullet}_k \xi^2_j \delta_{mn} + \mu (\xi^{1, \ast}_n \xi^{1}_j + \xi^1_n \xi^{1, \ast}_j)\xi^{2, \bullet}_m \xi^2_j + \mu \xi^{1, \ast}_k\xi^1_m \xi^{2, \bullet}_k \xi^2_n \\
& + \mu(\xi^{2, \bullet}_n\xi^2_j + \xi^2_n \xi^{2, \bullet}_j)\xi^{1, \ast}_m\xi^1_j + \mu \xi^{2, \bullet}_k\xi^2_m\xi^{1, \ast}_k\xi^1_n\\
& + \frac 14 A [(\xi^{1, \ast}_m \xi^1_j + \xi^1_m \xi^{1, \ast}_j)(\xi^{2, \bullet}_n\xi^2_j + \xi^2_n\xi^{2, \bullet}_j) + (\xi^{2, \bullet}_m \xi^2_j + \xi^2_m \xi^{2, \bullet}_j)(\xi^{1, \ast}_n\xi^1_j + \xi^1_n\xi^{1, \ast}_j) ]\\
& + \ha B (\xi^{1, \ast}_i \xi^1_j + \xi^1_i \xi^{1, \ast}_j)(\xi^{2, \bullet}_i\xi^2_j + \xi^2_i\xi^{2, \bullet}_j) \delta_{mn}.
\end{split}
\eeqq
These terms represents the interaction of all possible combinations of the H, V modes. 

\begin{figure}[htbp]
\centering
\includegraphics[scale=0.7]{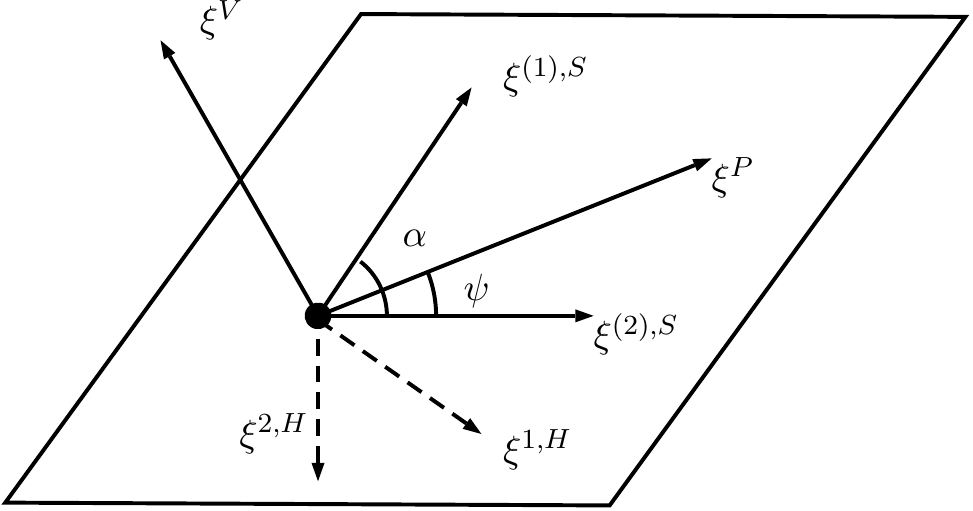}
\caption{The interaction plane for S--S wave interactions.}
\label{picplaness}
\end{figure}
 
 Remember that we are computing the P mode of $\mcg^{SS}$ when the interaction condition is satisfied. So we let $\zeta^P = \zeta^1 + \zeta^2 \in \Sigma^P$ and $\zeta^P = (\tau^P, \xi^P)$. 
As before, we get 
 \beq
 \begin{split}
\textbf{h}(z, \zeta) &= \sigma(\nabla\cdot \mcg(u^{1, S}, u^{2, S}))(z, \zeta) \\
&=  \imath (\textbf{g}^{HH}(z, \zeta) + \textbf{g}^{HV}(z, \zeta) + \textbf{g}^{VH}(z, \zeta) + \textbf{g}^{VV}(z, \zeta)) \xi^P. 
 \end{split}
 \eeq
 Finally, we project the symbol to $\xi^P$ direction to get the symbol of the P mode:
  \beq 
{\bf h}^{\bullet} = |\xi^P|^{-2} (\xi^P \imath \textbf{g}^\bullet(z, \zeta) \xi^P )\xi^P, \ \ \bullet =  HH, HV, VH, VV.
 \eeq 
Because of the orthogonality, one can check that ${\bf h}^{HV} = {\bf h}^{VH} = 0$ (details are omitted).  We compute ${\bf h}^{HH}, {\bf h}^{VV}$ carefully. We have  
 \beq
 \begin{split}
\imath  {\bf h}^{VV} & =  b_V^1b_V^2 |\xi^P|^{-2} [\la |\xi^P|^2 (\xi^1\cdot \xi^2)(\xi^{V}\cdot \xi^{V}) +  2\mu (\xi^1\cdot \xi^P) (\xi^V\cdot \xi^V) (\xi^2\cdot \xi^P) \\
&+ \frac 12 A(\xi^1\cdot \xi^P)(\xi^V\cdot \xi^V)(\xi^2\cdot \xi^P)  + B|\xi^P|^2 (\xi^V\cdot\xi^V)(\xi^1\cdot \xi^2)]\xi^P\\
& =  b_V^1b_V^2|\xi^1||\xi^2| [(\la + B)\cos \alpha +  (\ha A + 2\mu)\cos\psi\cos(\alpha-\psi)]\xi^P.
 \end{split}
 \eeq
 This term is generically non-vanishing if $\la + B\neq 0$ or $\ha A+2\mu\neq 0$. At last, we compute
 \beq
 \begin{split}
\imath |\xi^P|^2 {\bf h}^{HH}   & =  \la b^1_H  b^2_H (\xi^{1, H}\cdot \xi^{2, H})(\xi^1\cdot \xi^2) (\xi^P\cdot \xi^P) \xi^P\\
&+ \mu b^1_Hb^2_H [(\xi^{2, H}\cdot \xi^P)(\xi^1\cdot \xi^2)(\xi^{1, H}\cdot \xi^P) + (\xi^{2, H}\cdot \xi^P)(\xi^2\cdot \xi^{1, H})(\xi^{1}\cdot \xi^P)\\
& + (\xi^{1}\cdot \xi^P)(\xi^2\cdot \xi^P)(\xi^{1, H}\cdot \xi^{2, H}) + (\xi^{1, H}\cdot \xi^P)(\xi^{2, H}\cdot \xi^P)(\xi^{1}\cdot \xi^2) \\
&+ (\xi^{1, H}\cdot \xi^P)(\xi^1\cdot \xi^{2, H})(\xi^{2}\cdot \xi^P) + (\xi^{1, H}\cdot \xi^{2, H})(\xi^2\cdot \xi^P)(\xi^{1}\cdot \xi^P)] \xi^P\\
& + \frac 14 A b^1_Hb^2_H[ (\xi^{1, H}\cdot \xi^P)(\xi^1\cdot \xi^2)(\xi^{2, H}\cdot \xi^{P})+  (\xi^{1, H}\cdot \xi^P)(\xi^1\cdot \xi^{2, H})(\xi^{2}\cdot \xi^{P})\\
& +  (\xi^{1}\cdot \xi^P)(\xi^{1, H}\cdot \xi^2)(\xi^{2, H}\cdot \xi^{P})+  (\xi^{1}\cdot \xi^P)(\xi^{1, H}\cdot \xi^{2, H})(\xi^{2}\cdot \xi^{P})]\xi^P\\
& + \frac 14 A b^1_Hb^2_H[ (\xi^{2, H}\cdot \xi^P)(\xi^1\cdot \xi^2)(\xi^{1, H}\cdot \xi^{P})+  (\xi^{2, H}\cdot \xi^P)(\xi^2\cdot \xi^{1, H})(\xi^{1}\cdot \xi^{P})\\
& +  (\xi^{2}\cdot \xi^P)(\xi^{2, H}\cdot \xi^1)(\xi^{1, H}\cdot \xi^{P})+  (\xi^{2}\cdot \xi^P)(\xi^{2, H}\cdot \xi^{1, H})(\xi^{1}\cdot \xi^{P})]\xi^P\\ 
& + B b^1_H b^2_H [(\xi^{1, H}\cdot \xi^{2, H})(\xi^1\cdot \xi^2) + (\xi^{1, H}\cdot \xi^2)(\xi^1\cdot \xi^{2, H})]\xi^P.
 \end{split}
 \eeq
 Then we have 
 \beq
 \begin{split}
 \imath {\bf h}^{HH} 
  = &b^1_H b^2_H |\xi^1||\xi^2| [\la \cos^2\alpha + \mu(2\cos^2 \alpha - \sin^2\alpha) + \ha A(\cos^2\alpha - \sin^2\alpha) + B(\cos^2\alpha - \sin^2\alpha)] \xi^P \\
 = & b^1_H b^2_H |\xi^1||\xi^2|  [(\la + 2\mu + B + \ha A) \cos^2\alpha - (\mu + B + \ha A)\sin^2\alpha]\xi^P.
 \end{split}
 \eeq
This term is generically non-vanishing if $\la + 2\mu + \ha A+ B\neq 0$ or $\mu+\ha A+ B\neq 0.$\\

 To conclude this section,  we summarize all the possible interactions in Table \ref{tabinter}.
 \begin{table}[htbp]
 \begin{center}
  \begin{tabular}{c | c | c | c}
    \hline
    \hline 
 Interactions & Non-vanishing conditions \\ \hline
   P $+$ P $\rightarrow$ SH & $\la + 2B + 3\mu + A\neq 0$ \\ [3pt]\hline 
    P $+$ SH $\rightarrow$ P  & $\la + 2B + 3\mu + A\neq 0$\\ [3pt]\hline
  P $+$ SH $\rightarrow$ SH & $\la +  2\mu + \ha A + B \neq 0$ or $\mu + \ha A + B\neq 0$\\ [3pt]\hline
        P $+$ SV $\rightarrow$ SV  & $\la + B\neq 0$ or $2\mu + \ha A\neq 0$\\ [3pt]\hline
    SH $+$ SH $\rightarrow$ P & Interaction condition and \\[3pt]
    & $\la +  2\mu + \ha A + B \neq 0$ or $\mu + \ha A + B\neq 0$\\ [3pt]\hline
    SH $+$ SV $\rightarrow \emptyset$  & None \\ [3pt]\hline
    SV $+$ SV $\rightarrow$ P & Interaction condition and \\[3pt]
 &   $\la +  B \neq 0$ or $ 2\mu + \ha A\neq 0$ \\[3pt] \hline
    \hline
  \end{tabular}
\end{center}
\caption{All possible nonlinear interactions. SH stands for the S mode within the interaction plane, SV stands for the S mode perpendicular to the interaction plane. Non-vanishing condition means the principal symbols of the nonlinear responses $u^\bullet$ in Theorem \ref{thmresp} are non-vanishing. The interaction condition is in Lemma \ref{lmint3}.} 
  \label{tabinter}
\end{table}

\section{The inverse problem}\label{sec-inv}
We complete the proofs of Theorem \ref{main} in this section. 
\bpf[Proof of Theorem \ref{main}]
First of all, from the displacement-to-traction map $\La$, we derive the linearized map $\La_{lin}$ which corresponds to the linearized elastic equation \eqref{eqlin1}, see \eqref{eqdtnlin}. This problem have been studied in \cite{Ra} for $\mu > 0, 3\la + 2\mu > 0$ on $\overline \Omega$ and in a more general setting in \cite{HU} for $\mu > 0, \la + \mu > 0$. We conclude that one can determine the P and S wave speed $\sqrt{\la + 2\mu}$ and $\sqrt{\mu}$, hence $\la$ and $\mu$ from $\La_{lin, T_0}, T_0> \text{diam}_S(\overline\Omega).$ 

It is convenient to consider the P, SV wave interaction. So  for any $(t_0, x_0)\in \mbr\times  \Omega$ and $\xi^1, \xi^2$ two linearly independent vectors at $x_0$, we choose two geodesics $c_1(s), c_2(s)$ for $-dt^2 + g_{P}, -dt^2 + g_{S}$ respectively  such that 
\beq
\begin{gathered}
c_\bullet(0) = (t_\bullet, x_\bullet)\in \mbr\times \p\Omega, \ \ c_\bullet(s_\bullet) = (t_0, x_0), \ \ s_\bullet > 0,  \\
\text{ and }\dot c_\bullet(s_\bullet) = (\tau^\bullet, \xi^\bullet), \ \ \bullet = 1, 2.
\end{gathered}
\eeq  
We let $\gamma^\bullet$ be the corresponding cotangent vectors at $(t_\bullet, x_\bullet), \bullet = 1, 2$. Then we construct two distorted plane waves $u^{(1)}, u^{(2)}$ as in Def.\ \ref{def-distor} for $\gamma^1, \gamma^2$ and a small parameter $\delta$. We take $f^{(1)} = f^{(1), P},  f^{(2)} = f^{(2), S}$ hence $u^{(1)} = u^{(1), P}, u^{(2)} = u^{(2), S}$ by Proposition \ref{proplin1}. Following the nonlinear analysis in Section \ref{sec-non}, we see that for $\delta$ sufficiently small, the nonlinear response $u^{PSS}$ in Theorem \ref{thmresp} is a conormal distribution to $\La^{PSS}$ (away from the wave front sets of the linear responses). From the principal symbol of $\p_{\eps_1}\p_{\eps_2}\La(f_\eps)|_{\eps_1=\eps_2=0}$ at the  boundary (for a measurement time $T_0 > 2\text{diam}_S(\Omega)$), we can determine the principal symbols of $u^{PSS}$ at $\mcz_{PS}$. From the symbol of the SV mode, we obtain the value of  $ \la +  B$ and $2\mu + \ha A$ at $x_0$, see Table \ref{tabinter}. This determines the value of $A, B$ at $x_0 $ and finishes the proof of Theorem \ref{main}. 
\epf

\section*{Acknowledgment} 
Maarten V. de Hoop acknowledges and sincerely thanks the Simons Foundation under the MATH$+$X program for financial support. He was                  
also partially supported by the members of the Geo-Mathematical Imaging Group at Rice University. 
Gunther Uhlmann is partially supported by NSF and a Si-Yuan Professorship at HKUST.



\begin{thebibliography}{99}
\bibitem{DH} C. Dafermos,  W. Hrusa. {\em Energy methods for quasilinear hyperbolic initial-boundary value problems. Applications to elastodynamics.} Archive for Rational Mechanics and Analysis 87.3 (1985): 267-292.

\bibitem{Den} N. Dencker. {\em On the propagation of polarization sets for systems of real principal type.} Journal of Functional Analysis 46.3 (1982): 351-372.

\bibitem{DUW} M. de Hoop, G. Uhlmann, Y. Wang. {\em Nonlinear responses from the interaction of two progressing waves at an interface.} arXiv:1708.07578  (2017). 

\bibitem{Du} J. J. Duistermaat. {\em Fourier Integral Operators.} Vol. 130. Springer Science \& Business Media, 1995.

\bibitem{Gol} Z. A. Gol'dberg. {\em Interaction of plane longitudinal and transverse elastic waves.} Soviet Physics Acoustics 6.3 (1961): 306-310.

\bibitem{GrU93} A. Greenleaf, G. Uhlmann. {\em Recovering singularities of a potential from singularities of scattering data.} Communications in Mathematical Physics 157.3 (1993): 549-572.

\bibitem{HU} S. Hansen, G. Uhlmann. {\em Propagation of polarization in elastodynamics with residual stress and travel times.} Mathematische Annalen 326.3 (2003): 563-587.

\bibitem{JK} G. L. Jones, D. R. Kobett. {\em Interaction of elastic waves in an isotropic solid.} The Journal of Acoustical Society of America Vol 35, no. 1, 1963.

\bibitem{Ho3} L. H\"ormander. {\em The analysis of linear partial differential operators III: Pseudo-differential operators.}  Reprint of the 1994 edition. Classics in Mathematics. Springer, Berlin, 2007. 

\bibitem{Ho4} L. H\"ormander. {\em The analysis of linear partial differential operators IV: Fourier integral operators.} Reprint of the 1994 edition. Classics in Mathematics. Springer-Verlag, Berlin, 2009. 

\bibitem{HKM} T. Hughes, T. Kato, J. E. Marsden. {\em Well-posed quasi-linear second-order hyperbolic systems with applications to nonlinear elastodynamics and general relativity.} Archive for Rational Mechanics and Analysis 63.3 (1977): 273-294.

\bibitem{KSC} B. N. Kuvshinov,  T. J. H. Smit,  X. H. Campman. {\em Non-linear interaction of elastic waves in rocks.} Geophysical Journal International 194.3 (2013): 1920-1940.

\bibitem{LL} L. Landau, E. M. Lifshitz. {\em Theory of Elasticity} Vol. 7. Course of Theoretical Physics 3 (1986): 109.

\bibitem{LUW1} M. Lassas, G. Uhlmann, Y. Wang. {\em Inverse problems for semilinear wave equations on Lorentzian manifolds.} 
Communications in Mathematical Physics.

\bibitem{Me} R. Melrose. {\em Differential analysis on manifolds with corners.} Unpublished lecture notes.

\bibitem{MU} R. Melrose, G. Uhlmann. {\em Lagrangian intersection and the Cauchy problem.} Communications on Pure and Applied Mathematics 32.4 (1979): 483-519.

\bibitem{NW} G. Nakamura, M. Watanabe. {\em An inverse boundary value problem for a nonlinear wave equation.} Inverse Problems and Imaging 2.1 (2008): 121-131.

\bibitem{No} A. N. Norris. {\em Finite-amplitude waves in solids.} Nonlinear Acoustics 9 (1998): 263-277.

\bibitem{Ra0} L. Rachele. {\em Boundary determination for an inverse problem in elastodynamics.} Communications in Partial Differential Equations 25.11-12 (2000): 1951-1996.

\bibitem{Ra} L. Rachele. {\em An inverse problem in elastodynamics: uniqueness of the wave speeds in the interior.} Journal of Differential Equations 162.2 (2000): 300-325.

\bibitem{SD} C. Stolk,  M. V. de Hoop. {\em Microlocal analysis of seismic inverse scattering in anisotropic elastic media.} Communications on Pure and Applied Mathematics 55.3 (2002): 261-301.

\bibitem{Tay} M. E. Taylor. {\em Reflection of singularities of solutions to systems of differential equations.} Communications on Pure and Applied Mathematics 28.4 (1975): 457-478.

\bibitem{WZ} Y. Wang, T. Zhou. {\em Inverse problems for quadratic derivative nonlinear wave equations.} Communications in Partial Differential Equations (to appear).


\end{thebibliography}
\end{document}